\theoremstyle{plain}
\newtheorem{theorem}{Theorem}[section] %Toshiki added
\newtheorem{corollary}[theorem]{Corollary}
\newtheorem{lemma}[theorem]{Lemma}
\newtheorem{proposition}[theorem]{Proposition}
\newtheorem{conjecture}[theorem]{Conjecture}
\theoremstyle{definition}
\newtheorem{definition}[theorem]{Definition}
\newtheorem{example}[theorem]{Example}
\newtheorem{remark}[theorem]{Remark}
\def\st#1#2{\left[#1\atop#2\right]}
\def\sts#1#2{\left\{#1\atop  #2\right\}}
\newcommand{\Z}{{\mathbb Z}}
\def\li{\text{\rm Li}}
\def\piros#1{{\color{red}#1}}%
\def\kek#1{{\color{blue}#1}}%
\newcommand{\scB}{\mathscr{B}}
\newcommand{\calT}{\mathcal{T}}
\title[On the combinatorics of symmetrized poly-Bernoulli numbers]{On the combinatorics of symmetrized poly-Bernoulli numbers}
\author{Be\'ata B\'enyi}
\address{\noindent Faculty of Water Sciences, University of Public Service, Budapest, HUNGARY}
\email{beata.benyi@gmail.com}
\author{Toshiki Matsusaka}
\address{Institute for Advanced Research, Nagoya University, Furo-cho, Chikusa-ku, Nagoya 464-8601, JAPAN}
\email{toshikimatsusaka@gmail.com}
\date{\today}
\subjclass[2010]{Primary: 05A19, Secondary: 11B68}
\keywords{Poly-Bernoulli number, Callan sequence, Alternative tableau}
\begin{document}
\begin{abstract}
In this paper we introduce three combinatorial models for symmetrized poly-Bernoulli numbers. Based on our models we derive generalizations of some identities for poly-Bernoulli numbers. Finally, we set open questions and directions of further studies.
\end{abstract}

%--------------------------------------------------------

\maketitle

%-------------------------------------------------------

\section{Introduction}
%-------------------------------------------------------

The symmetrized poly-Bernoulli numbers were introduced by Kaneko-Sakurai-Tsumura \cite{KST} in order to generalize the dual formula of poly-Bernoulli numbers. The poly-Bernoulli polynomials $B_n^{(k)}(x)$ of index $k\in \Z$ are defined by the generating function
\begin{align*}
\sum_{n=0}^{\infty}B_n^{(k)}(x)\frac{t^n}{n!}=e^{-xt}\frac{\li_k(1-e^{-t})}{1-e^{-t}},
\end{align*}
where $\li_k(z)$ is the polylogarithm function,
\begin{align*}
\li_k(z)=\sum_{m=1}^\infty \frac{z^m}{m^k}\quad (|z|<1).
\end{align*}
The two types of poly-Bernoulli numbers, $B_n^{(k)}$ and $C_{n}^{(k)}$ \cite{AIK,K97,K10} are special values of the poly-Bernoulli polynomials at $x=0$ and $x=1$.
\begin{align*}
B_n^{(k)}(0)=B_n^{(k)}\quad\mbox{and}\quad B_n^{(k)}(1)=C_n^{(k)}.
\end{align*}

For negative $k$ index these number sequences are integers (A099594 and A136126 \cite{OEIS}) and have several interesting combinatorial interpretations \cite{BH15, BH17, B08}.  
%The poly-Bernoulli polynomials satisfy

%\begin{align*}
%B_n^{(k)}(x)=\sum_{j=0}^{n}(-1)^{n-j}\binom{n}{j}B_j^{(k)}x^{n-j}.
%\end{align*}

Both $B_n^{(-k)}$ and $C_n^{(-k)}$ are symmetric number arrays. These properties are special cases of the more general identity on poly-Bernoulli polynomials which hold for any non-negative integers $n$, $k$ and $m$. 
\begin{align*}
\sum_{j=0}^{m}\st{m}{j} B_{n}^{(-k-j)}(m)=\sum_{j=0}^{m}\st{m}{j} B_{k}^{(-n-j)}(m),
\end{align*}
where $\st{n}{k}$ is the (unsigned) Stirling number of the first kind which count the number of permutations of $[n]=\{1,2,\ldots, n\}$ with $k$ disjoint cycles.  

Kaneko-Sakurai-Tsumura \cite{KST} defined this expression as the \textit{symmetrized poly-Bernoulli numbers}.
\begin{align*}
\scB_n^{(-k)}(m):=\sum_{j=0}^{m}\st{m}{j} B_{n}^{(-k-j)}(m).
\end{align*}
Note that
\begin{align*}
\scB_n^{(-k)}(0)=B_n^{(-k)}\quad \mbox{and}\quad \scB_n^{(-k)}(1)=C_n^{(-k-1)}.
\end{align*}
The authors \cite{KST} suggested the combinatorial investigations of these number sequences. The first result in this direction is due to the second author. Matsusaka \cite{M20} showed that the alternating diagonal sums of symmetrized poly-Bernoulli numbers coincide with certain values of the Dumont-Foata polynomials/Gandhi polynomials.
\begin{align}\label{Gandhi}
\sum_{j=0}^{n} (-1)^j \scB_{n-j}^{(-j)}(k)=k!(-1)^{n/2}G_n(k),
\end{align}
where 
$G_n(z)$ denotes the Gandhi polynomials satisfying
\begin{align*}
G_{n+2}(z)=z(z+1)G_n(z+1)-z^2G_n(z)
\end{align*}
with $G_0(z)=1$ and $G_1(z)=0$.
Special cases of the theorem \cite{M20} are
\begin{align*}
\sum_{j=0}^{n} (-1)^j B_{n-j}^{(-j)} = \begin{cases}
	1, &\text{if } n = 0,\\
	0, &\text{if  } n > 0,
\end{cases}
\end{align*}
which was proven analytically in \cite{AK99} and combinatorially in \cite{BH15}, and
\begin{align*}
\sum_{j=0}^{n} (-1)^j C_{n-j}^{(-j-1)} = -G_{n+2},
\end{align*}
where $G_n := (2 - 2^{n+1}) B_n^{(1)} (1)$ are the Genocchi numbers $0,1, -1,0,1,0,-3,0,17,0,-155\ldots$ A001469 \cite{OEIS}. This last identity was proven by using analytical methods in \cite{KST}, but providing a combinatorial explanation is still open and seems to be a difficult problem. 

The paper is organized as follows. In the first three sections after the introduction we introduce three combinatorial models for the normalized symmetrized poly-Bernoulli numbers. In Section 5 we prove some recurrence relations. In the last section we formulate a conjecture and pose some open questions.

%\section{Notations}

%$[n]=\{1,2,\ldots, n\}$. $\mathfrak{S}_n$ denotes the set of permutations of $[n]$. 
%$\sts{n}{k}$ denotes the Stirling number of the second kind, that enumerates the set partitions of $[n]$ into $k$ non-empty blocks.
%$x^{\overline{n}}=x(x+1)(x+2)\cdots(x+n-1)$ is the rising factorial. $\delta_{i,j}$ is the Kronecker delta defined by $\delta_{i,j} = 1$ if $i = j$ and $\delta_{i,j} = 0$ otherwise. $\lah{n}{k}$ denote the Lah numbers that counts the number of ways $[n]$ can be partitioned into $k$ linearly ordered subsets. 

%------------------------------------------------

\section{Barred Callan sequences}

%------------------------------------------------

In this section we present a model of the \textit{normalized symmetrized poly-Bernoulli numbers} $\widehat\scB_n^k(m)$. We are interested in the combinatorics of symmetrized poly-Bernoulli numbers with negative $k$ indices (since these numbers are positive integers). Keeping the notation simpler, we define for non-negative integers $n$,$k$ and $m$,
\[
	\widehat{\scB}_n^k(m):=\frac{1}{m!}\scB_n^{(-k)}(m) \in \Z.
\]
In A099594 \cite{OEIS} Callan has given a combinatorial interpretation of the poly-Bernoulli numbers in certain type of permutations. Namely, $B_n^{(-k)}$ is the number of permutations of $[n+k]=\{1,\ldots, n+k \}$ such that all substrings of elements $\leq n$ and all substrings of elements $>n$ are in increasing order. Such permutations were called in the literature \cite{BH15,BH17} \emph{Callan permutations}. Essentially the same are \emph{Callan sequences} that we define as follows. Consider the set $N=\{\piros{1},\ldots, \piros{n}\}\cup\{\piros{*}\}$ (referred to as red elements) and  $K=\{\kek{1},\ldots, \kek{k}\}\cup\{\kek{*}\}$ (referred to as blue elements). Let $R_1,\ldots, R_r,R^*$ be a partition of the set $N$ into $r+1$ non-empty blocks ($0 \leq r \leq n$) and $B_1,\ldots,B_r,B^*$ a partition of the set of $K$ into $r+1$ non-empty blocks. The blocks containing $\kek{*}$ and $\piros{*}$ are denoted by $B^*$ and $R^*$, respectively. We call $B^*$ and $R^*$ \emph{extra blocks}, while the other blocks \emph{ordinary blocks}. We call a pair of a blue and  a red block, $(B_i;R_i)$  for an $i$ a \emph{Callan pair}. A Callan sequence is a linear arrangement of Callan pairs augmented by the extra pair 
\[(B_1;R_1)(B_2;R_2)\cdots(B_r;R_r)\cup(B^*;R^*).\]  

It is easy to check that this definition is equivalent with the one given by Callan in \cite{OEIS}. Given a Callan sequence, write the elements of the blocks in increasing order, record the blocks in the given order and if there are elements in $R^*$ besides $\piros{*}$ move this red elements into the front of the sequence, while the elements in $B^*$ at the end of the sequence. Delete $\kek{*}$ and $\piros{*}$, and shift the blue elements by $n$, $\kek{i}\rightarrow i+n$.

\begin{example}[All Callan sequences with $n = 2$ and $k=2$]
	\begin{align*}
		&(\kek{1},\kek{2},\kek{*};\piros{1},\piros{2};\piros{*}) & &(\kek{1},\kek{2};\piros{1},\piros{2})(\kek{*};\piros{*}), & &(\kek{1};\piros{1},\piros{2})(\kek{2},\kek{*};\piros{*}), & &(\kek{2};\piros{1},\piros{2})(\kek{1},\kek{*};\piros{*}), & &(\kek{1},\kek{2};\piros{1})(\kek{*};\piros{2},\piros{*}),\\
		&(\kek{1},\kek{2};\piros{2})(\kek{*};\piros{1},\piros{*}), & &(\kek{1};\piros{1})(\kek{2},\kek{*};\piros{2},\piros{*}), & &(\kek{2};\piros{1})(\kek{1},\kek{*};\piros{2},\piros{*}), & &(\kek{1};\piros{2})(\kek{2},\kek{*};\piros{1},\piros{*}), & &(\kek{2};\piros{2})(\kek{1},\kek{*};\piros{1},\piros{*}),\\
		&(\kek{1};\piros{1})(\kek{2};\piros{2})(\kek{*};\piros{*}), & &(\kek{1};\piros{2})(\kek{2};\piros{1})(\kek{*};\piros{*}), & &(\kek{2};\piros{1})(\kek{1};\piros{2})(\kek{*};\piros{*}), & &(\kek{2};\piros{2})(\kek{1};\piros{1})(\kek{*};\piros{*}).
	\end{align*}
	We list the corresponding Callan permutations in the same order as above
	\begin{align*}
		&\piros{12} \kek{34}, & &\kek{34}\piros{12}, & &\kek{3}\piros{12}\kek{4}, & &\kek{4}\piros{12}\kek{3}, & &\piros{2}\kek{34}\piros{1},\\
		&\piros{1}\kek{34}\piros{2}, & &\piros{2}\kek{3}\piros{1}\kek{4}, & &\piros{2}\kek{4}\piros{1}\kek{3}, & &\piros{1}\kek{3}\piros{2}\kek{4}, & &\piros{1}\kek{4}\piros{2}\kek{3},\\
		&\kek{3}\piros{1}\kek{4}\piros{2}, & &\kek{3}\piros{2}\kek{4}\piros{1}, & &\kek{4}\piros{1}\kek{3}\piros{2}, & &\kek{4}\piros{2}\kek{3}\piros{1}.
	\end{align*}
\end{example}

\begin{definition}
	For integers $n,k > 0$ and $m \geq 0$, the $m$-barred Callan sequence of size $n \times k$ is the Callan sequence with $m$ bars inserted between (before and after) the ordinary pairs. We let $\mathcal{C}_n^k(m)$ denote the number of all $m$-barred Callan sequences of size $n \times k$.
\end{definition}

\begin{example} [All $2$-barred Callan sequences with $n = 3$ and $k = 1$]
	\begin{align*}
		&||(\kek{1}, \kek{*}; \piros{1}, \piros{2}, \piros{3}, \piros{*}), & &||(\kek{1}; \piros{1}, \piros{2}, \piros{3}) (\kek{*}; \piros{*}), & &||(\kek{1}; \piros{1}, \piros{2})(\kek{*}; \piros{3}, \piros{*}), & &||(\kek{1}; \piros{1},\piros{3}) (\kek{*}; \piros{2}, \piros{*}),\\
		&||(\kek{1}; \piros{2}, \piros{3}) (\kek{*}; \piros{1}, \piros{*}), & &||(\kek{1}; \piros{1}) (\kek{*}; \piros{2}, \piros{3}, \piros{*}), & &||(\kek{1}; \piros{2}) (\kek{*}; \piros{1}, \piros{3}, \piros{*}), & &||(\kek{1}; \piros{3}) (\kek{*}; \piros{1}, \piros{2}, \piros{*}),\\
		&|(\kek{1}; \piros{1}, \piros{2}, \piros{3}) | (\kek{*}; \piros{*}), & &|(\kek{1}; \piros{1}, \piros{2}) | (\kek{*}; \piros{3}, \piros{*}), & &|(\kek{1}; \piros{1},\piros{3}) | (\kek{*}; \piros{2}, \piros{*}), & &|(\kek{1}; \piros{2}, \piros{3}) | (\kek{*}; \piros{1}, \piros{*}),\\
		&|(\kek{1}; \piros{1}) | (\kek{*}; \piros{2}, \piros{3}, \piros{*}), & &|(\kek{1}; \piros{2}) | (\kek{*}; \piros{1}, \piros{3}, \piros{*}), & &|(\kek{1}; \piros{3}) | (\kek{*}; \piros{1}, \piros{2}, \piros{*}),\\
	 	&(\kek{1}; \piros{1}, \piros{2}, \piros{3}) || (\kek{*}; \piros{*}), & &(\kek{1}; \piros{1}, \piros{2}) || (\kek{*}; \piros{3}, \piros{*}), & &(\kek{1}; \piros{1},\piros{3}) || (\kek{*}; \piros{2}, \piros{*}), & &(\kek{1}; \piros{2}, \piros{3}) || (\kek{*}; \piros{1}, \piros{*}),\\
		&(\kek{1}; \piros{1}) || (\kek{*}; \piros{2}, \piros{3}, \piros{*}), & &(\kek{1}; \piros{2}) || (\kek{*}; \piros{1}, \piros{3}, \piros{*}), & &(\kek{1}; \piros{3}) || (\kek{*}; \piros{1}, \piros{2}, \piros{*}).
	\end{align*}
\end{example} 
\begin{remark}
$m$-barred Callan sequences can be viewed in fact as a pair $(P, BP)$, where $P$ is a preferential arrangement of a subset of $\{1,2,\ldots,n\}$ and $BP$ is a barred preferential arrangement of a subset of $\{1,2,\ldots, k\}$. Barred preferential arrangements were introduced in \cite{AUP} and were used for combinatorial analysis of generalizations of geometric polynomials for instance in \cite{NBCC}.
\end{remark}
\begin{theorem}
The number $\mathcal{C}_n^k(m)$ of $m$-barred Callan sequences of size $n \times k$ is given by the normalized symmetrized poly-Bernoulli number $\widehat{\scB}_n^k(m)$.
\end{theorem}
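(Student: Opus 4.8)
The plan is to compute $\mathcal{C}_n^k(m)$ by a direct decomposition and to compare the result with a closed form for $\widehat{\scB}_n^k(m)$ extracted from the defining generating function. First I would sort the $m$-barred Callan sequences by the number $r$ of ordinary Callan pairs ($0 \le r \le \min(n,k)$). A sequence with exactly $r$ ordinary pairs is assembled from four independent choices: a partition of the red set $\{\piros{1},\dots,\piros{n},\piros{*}\}$ into the $r$ ordinary red blocks and the extra block $R^*$, counted by $\sts{n+1}{r+1}$; a partition of the blue set into the $r$ ordinary blue blocks and $B^*$, counted by $\sts{k+1}{r+1}$; the linear arrangement of the $r$ pairs, which amounts to ordering the red blocks and the blue blocks independently along the line, contributing $(r!)^2$; and the insertion of the $m$ indistinguishable bars into the $r+1$ gaps around the ordinary pairs, a weak composition of $m$ into $r+1$ parts, contributing $\binom{m+r}{r}$. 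Multiplying and summing gives
\[
\mathcal{C}_n^k(m) = \sum_{r\ge0}\binom{m+r}{r}(r!)^2\sts{n+1}{r+1}\sts{k+1}{r+1}.
\]
(The specialization $m=0$ recovers the classical double-Stirling formula $B_n^{(-k)}=\sum_{r}(r!)^2\sts{n+1}{r+1}\sts{k+1}{r+1}$, a useful consistency check.)

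It then remains to show that $\widehat{\scB}_n^k(m)$ equals the same sum, i.e. that $\scB_n^{(-k)}(m) = \sum_{r\ge0}(m+r)!\,r!\,\sts{n+1}{r+1}\sts{k+1}{r+1}$, since $m!\binom{m+r}{r}(r!)^2 = (m+r)!\,r!$. I would start from the definition of $\scB_n^{(-k)}(m)$ and the generating function in the introduction. Writing $\li_{-k-j}(z) = \sum_{a\ge1}a^{k+j}z^a$ and using the identity $\sum_{j=0}^m\st{m}{j}a^j = a^{\overline{m}} := a(a+1)\cdots(a+m-1)$, the inner sum over $j$ collapses and one gets
\[
\sum_{n\ge0}\scB_n^{(-k)}(m)\frac{t^n}{n!} = e^{-mt}\sum_{a\ge1}a^{\overline{m}}\,a^{k}\,(1-e^{-t})^{a-1}.
\]
After invoking $\sum_{n\ge0}\sts{n+1}{r+1}\frac{t^n}{n!} = \frac{(e^t-1)^r e^t}{r!}$, the target closed form becomes equivalent to the generating-function identity
\[
e^{-mt}\sum_{a\ge1}a^{\overline{m}}\,a^{k}\,(1-e^{-t})^{a-1} = \sum_{r\ge0}(m+r)!\,\sts{k+1}{r+1}\,(e^t-1)^r e^t.
\]

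I expect this last identity to be the main obstacle, precisely because of the interaction between the factor $e^{-mt}$ and the rising-factorial weight $a^{\overline{m}}$: without $e^{-mt}$ (the case $m=0$) the left side telescopes cleanly into the classical formula for $B_n^{(-k)}$, whereas the shift introduced by $e^{-mt}$ is what upgrades the coefficient $r!$ to $(m+r)!$. To handle it I would pass to the variable $w = 1-e^{-t}$ (so $e^{-t}=1-w$, $e^t=(1-w)^{-1}$) and prove the equivalent rational-function identity
\[
(1-w)^m\sum_{a\ge1}a^{\overline{m}}\,a^{k}\,w^{a-1} = \sum_{r\ge0}(m+r)!\,\sts{k+1}{r+1}\,\frac{w^r}{(1-w)^{r+1}}.
\]
This can be attacked either by comparing coefficients of $w^N$ on both sides (the left side becomes an $m$-th finite difference of $(N+1)^{\overline{m}}(N+1)^k$, the right side $\sum_r (m+r)!\,\sts{k+1}{r+1}\binom{N}{r}$, reducing the claim to a Stirling-number identity), or by induction on $k$ using the operator $D = w\,\tfrac{d}{dw}$: since $\sum_{a\ge1}a^{\overline{m}}w^{a-1} = m!\,(1-w)^{-(m+1)}$ and passing from $k$ to $k+1$ applies $D+1$, the claim reduces to the recurrence $\sts{k+2}{r+1} = (r+1)\sts{k+1}{r+1} + \sts{k+1}{r}$. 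Either route is routine once set up, and verifying the base cases $k=0,1$ confirms the bookkeeping. Extracting $[t^n/n!]$ then yields $\scB_n^{(-k)}(m) = \sum_r(m+r)!\,r!\,\sts{n+1}{r+1}\sts{k+1}{r+1}$, and dividing by $m!$ matches the formula for $\mathcal{C}_n^k(m)$ obtained above, completing the proof.
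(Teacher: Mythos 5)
Your proposal is correct, and its combinatorial half coincides exactly with the paper's argument: stratifying the $m$-barred Callan sequences by the number $r$ of ordinary pairs, counting the two partitions, the $(r!)^2$ orderings, and the $\binom{r+m}{m}$ placements of bars gives precisely the paper's formula \eqref{Cal-exp}. The genuine difference is in the second half. At this point the paper simply compares \eqref{Cal-exp} with the closed formula \cite[(2.9)]{KST} for $\scB_n^{(-k)}(m)$, outsourcing all analytic content to that reference, whereas you derive the needed closed formula $\scB_n^{(-k)}(m)=\sum_r (m+r)!\,r!\,\sts{n+1}{r+1}\sts{k+1}{r+1}$ from the definitions. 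Your derivation is sound: the collapse $\sum_{j=0}^m \st{m}{j}a^j=a^{\overline{m}}$ is the standard Stirling identity, the substitution $w=1-e^{-t}$ is a legitimate change of variable for formal power series, and the induction on $k$ does reduce to the recurrence $\sts{k+2}{r+1}=(r+1)\sts{k+1}{r+1}+\sts{k+1}{r}$; indeed, with $f_k(w)=\sum_{a\geq 1}a^{\overline{m}}a^k w^{a-1}$ one has $f_{k+1}=(1+D)f_k$ for $D=w\,\frac{d}{dw}$, and applying $1+D$ to $w^r(1-w)^{-(m+r+1)}$ produces $(r+1)w^r(1-w)^{-(m+r+1)}+(m+r+1)w^{r+1}(1-w)^{-(m+r+2)}$, whose recombination is exactly that recurrence. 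One small point of care: since $1+D$ does not commute with multiplication by $(1-w)^m$, the induction should be run on the identity in the form $f_k(w)=\sum_r (m+r)!\,\sts{k+1}{r+1}\,w^r(1-w)^{-(m+r+1)}$, i.e.\ with the factor $(1-w)^m$ moved to the right-hand side of your displayed identity; this is a trivial rearrangement of what you wrote. The trade-off: the paper's proof is two lines but not self-contained, while yours is longer but makes the theorem independent of \cite{KST} --- in effect you have reproved their formula (2.9) en route.
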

\begin{proof}
Let $r$ be the number of ordinary pairs. Partition the elements of $N$ into $r+1$ blocks in $\sts{n+1}{r+1}$ ways, similarly $K$ into $r+1$ blocks in $\sts{k+1}{r+1}$ ways. ($\sts{n}{k}$ denotes the Stirling number of the second kind, counting the number of partitions of an $n$-element set into $k$ non-empty blocks.)
Order both types of ordinary blocks in $r!$ ways and choose the positions of the $m$ bars from the $r+1$ places between the ordinary blocks (note that repetition is allowed) in $\binom{r+1+m-1}{m}$ ways. By summing them up, we have
\begin{align}\label{Cal-exp}
\mathcal{C}_n^k(m)=\sum_{r=0}^{\min(n,k)}\binom{r+m}{m}(r!)^2\sts{n+1}{r+1}\sts{k+1}{r+1}.
\end{align}
By comparing this expression \eqref{Cal-exp} with the closed formula derived in \cite[(2.9)]{KST} for the symmetrized poly-Bernoulli numbers, the theorem follows.  
\end{proof}

It obviously follows from the definition that
\[
	\mathcal{C}_n^k(m) =\mathcal{C}_k^n(m).
\]

\begin{corollary}
A \emph{labeled} $m$-barred Callan sequence is an $m$-barred Callan sequence such that the bars are labeled. The number of labeled $m$-barred Callan sequences of size $n \times k$ is given by $\scB_n^{(-k)}(m)$. Clearly, $\scB_n^{(-k)}(m)=\scB_k^{(-n)}(m)$. 
\end{corollary}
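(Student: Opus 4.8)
The plan is to deduce the corollary directly from the preceding theorem by analyzing the effect of labeling the bars. By the theorem, the number of (unlabeled) $m$-barred Callan sequences of size $n \times k$ equals $\widehat{\scB}_n^k(m) = \frac{1}{m!}\scB_n^{(-k)}(m)$, so it suffices to show that attaching the distinct labels $1, 2, \ldots, m$ to the $m$ bars multiplies this count by exactly $m!$. First I would set up the forgetful map sending a labeled $m$-barred Callan sequence to the underlying unlabeled one (erasing the labels but retaining the bars), and argue that every fiber has the same cardinality $m!$; the corollary then follows at once, since $m! \cdot \widehat{\scB}_n^k(m) = \scB_n^{(-k)}(m)$.

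The key step is the fiber count. Fix an unlabeled $m$-barred Callan sequence with $r$ ordinary pairs, and let $a_0, a_1, \ldots, a_r$ denote the number of bars occupying each of the $r+1$ gaps, so that $a_0 + a_1 + \cdots + a_r = m$. To reconstruct a labeled sequence in this fiber I must distribute the label set $\{1, \ldots, m\}$ among the gaps respecting these multiplicities and then read off the labeled bars left-to-right within each gap. The number of such reconstructions is
\begin{align*}
\binom{m}{a_0, a_1, \ldots, a_r} \cdot \prod_{i=0}^{r} a_i! = \frac{m!}{a_0! a_1! \cdots a_r!} \cdot a_0! a_1! \cdots a_r! = m!,
\end{align*}
independent of the gap profile $(a_0, \ldots, a_r)$. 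Hence every fiber has exactly $m!$ elements, as required.

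Alternatively, and perhaps more transparently, I would simply rerun the counting argument of the theorem with labeled bars in place of unlabeled ones. The only change is in the placement of the bars: instead of choosing a multiset of gap positions in $\binom{r+m}{m}$ ways, I interleave the $m$ labeled bars among the $r+1$ gaps keeping track of their left-to-right order, which can be done in $\frac{(r+m)!}{r!} = m!\binom{r+m}{m}$ ways. Summing over $r$ exactly as in \eqref{Cal-exp} factors out a global $m!$, giving $m! \cdot \mathcal{C}_n^k(m) = \scB_n^{(-k)}(m)$.

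The only point that demands care -- the main, if modest, obstacle -- is the possibility that several bars share a single gap: one must confirm that labeling still contributes the full factor $m!$ rather than something smaller, which is precisely the content of the multinomial identity above. Finally, the symmetry $\scB_n^{(-k)}(m) = \scB_k^{(-n)}(m)$ is immediate: combining $\scB_n^{(-k)}(m) = m! \cdot \mathcal{C}_n^k(m)$ with the evident bijection $\mathcal{C}_n^k(m) = \mathcal{C}_k^n(m)$ (interchange the roles of the red and blue blocks) yields $\scB_n^{(-k)}(m) = m! \cdot \mathcal{C}_k^n(m) = \scB_k^{(-n)}(m)$.
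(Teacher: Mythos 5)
Your proposal is correct and follows the same route the paper implicitly intends: the corollary is stated there without proof, as an immediate consequence of the theorem $\mathcal{C}_n^k(m)=\widehat{\scB}_n^k(m)=\frac{1}{m!}\scB_n^{(-k)}(m)$ together with the evident symmetry $\mathcal{C}_n^k(m)=\mathcal{C}_k^n(m)$. Your fiber-counting argument (the multinomial identity showing each fiber of the label-forgetting map has size exactly $m!$, even when several bars share a gap) simply makes explicit the $m!$ factor the paper takes for granted, so nothing is missing and nothing differs in substance.
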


By the right-hand side of (\ref{Cal-exp}), we define $\mathcal{C}_n^k(m)$ for $n= 0$ or $k=0$. Namely, $\mathcal{C}_n^0 (m) = \mathcal{C}_0^k (m) := 1$.

\begin{theorem}
	For integers $n \geq 0$ and $k > 0$, the number $\mathcal{C}_n^k(m)$ obeys the recurrence relation of
	\[
		\mathcal{C}_n^k(m) = \mathcal{C}_n^{k-1} (m) + \sum_{j=1}^n {n \choose j} \mathcal{C}_{n-j+1}^{k-1} (m) + m \sum_{j=1}^n {n \choose j} \mathcal{C}_{n-j}^{k-1} (m).
	\]
\end{theorem}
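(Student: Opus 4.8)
The plan is to prove the recurrence algebraically from the closed form \eqref{Cal-exp}, since the three terms on the right do not correspond term-by-term to the natural trichotomy for the largest blue label $\kek{k}$. Only the first term, $\mathcal{C}_n^{k-1}(m)$, has a transparent bijective meaning, namely deleting $\kek{k}$ from the extra block $B^*$; the remaining two terms match the ``ordinary-block'' cases only \emph{globally}, and a direct bijection is obstructed by the way bars in two adjacent slots merge when a Callan pair is deleted. So instead I would expand everything through \eqref{Cal-exp} and collapse the result using two Stirling identities.

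First I would substitute \eqref{Cal-exp} into each summand of the right-hand side. Since $\mathcal{C}_a^{k-1}(m)=\sum_s \binom{s+m}{m}(s!)^2\sts{a+1}{s+1}\sts{k}{s+1}$, every term carries the common factor $\binom{s+m}{m}(s!)^2\sts{k}{s+1}$, so after interchanging the order of summation the right-hand side becomes
\[
\sum_s \binom{s+m}{m}(s!)^2\sts{k}{s+1}\Bigl[\sts{n+1}{s+1}+\sum_{j=1}^n\binom{n}{j}\sts{n-j+2}{s+1}+m\sum_{j=1}^n\binom{n}{j}\sts{n-j+1}{s+1}\Bigr].
\]
The core of the argument is to collapse the bracket. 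Reindexing $i=n-j$ turns the two inner sums into $\sum_{i=0}^{n-1}\binom{n}{i}\sts{i+2}{s+1}$ and $\sum_{i=0}^{n-1}\binom{n}{i}\sts{i+1}{s+1}$, and I claim
\[
\sum_{i=0}^{n-1}\binom{n}{i}\sts{i+1}{s+1}=(s+1)\sts{n+1}{s+2},
\]
\[
\sts{n+1}{s+1}+\sum_{i=0}^{n-1}\binom{n}{i}\sts{i+2}{s+1}=(s+1)\sts{n+1}{s+1}+(s+1)^2\sts{n+1}{s+2}.
\]
Both follow from the standard identity $\sts{N+1}{p}=\sum_{i=0}^{N}\binom{N}{i}\sts{i}{p-1}$ together with the basic recurrence $\sts{N+1}{p}=p\sts{N}{p}+\sts{N}{p-1}$: for the first, expand $\sts{i+1}{s+1}=(s+1)\sts{i}{s+1}+\sts{i}{s}$, apply the standard identity to the two resulting sums, and simplify with the recurrence; the second then follows by the same device applied once more, using the first identity at indices $s$ and $s-1$. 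With these, the bracket reduces to $(s+1)\sts{n+1}{s+1}+(s+1)(s+1+m)\sts{n+1}{s+2}$.

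It then remains to recognize this as $\mathcal{C}_n^k(m)$. Splitting the sum in two, the $\sts{n+1}{s+2}$-part is rewritten with the elementary identity $(s+1+m)\binom{s+m}{m}=(s+1)\binom{s+1+m}{m}$ and $(s+1)^2(s!)^2=((s+1)!)^2$; after the shift $r=s+1$ it becomes $\sum_r\binom{r+m}{m}(r!)^2\sts{n+1}{r+1}\sts{k}{r}$, while the $\sts{n+1}{s+1}$-part is $\sum_r(r+1)\binom{r+m}{m}(r!)^2\sts{n+1}{r+1}\sts{k}{r+1}$. Adding them and applying the Stirling recurrence $\sts{k+1}{r+1}=(r+1)\sts{k}{r+1}+\sts{k}{r}$ in reverse recombines the bracket into $\sts{k+1}{r+1}$, giving exactly \eqref{Cal-exp} for $\mathcal{C}_n^k(m)$. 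The main obstacle will be establishing the two Stirling identities and keeping the constants straight: the relation $(s+1+m)\binom{s+m}{m}=(s+1)\binom{s+1+m}{m}$ is precisely what converts the stray $m$ and the $(s+1)^2$ into the clean shift $r=s+1$, and a dropped factor of $(s+1)$ here breaks the match. The first displayed Stirling identity is the linchpin; once it is in hand the rest is bookkeeping, and for boundary indices ($s=0$, or $n=0,k=1$) I would check the degenerate cases separately using the convention $\mathcal{C}_n^0(m)=\mathcal{C}_0^k(m)=1$ fixed after \eqref{Cal-exp}.
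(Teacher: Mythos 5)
Your proof is correct, but it takes a genuinely different route from the paper. The paper proves this recurrence purely combinatorially: it classifies $m$-barred Callan sequences according to the location of the smallest blue element $\kek{1}$ (in the extra block, alone in an ordinary pair preceded or not preceded by a bar, or together with other blue elements), and builds explicit deletion/insertion correspondences; in particular, the factor $m$ in the last term arises because deleting a pair $(\kek{1};\piros{R})$ that sits directly after one of the $m$ bars produces the same smaller sequence $m$ times. So the term-by-term combinatorial matching you suspected was ``obstructed'' is in fact exactly what the paper carries out (your three terms do correspond to its cases $(0){+}(1)$, $(2)_0{+}(3)_\ell{+}(4)_0$, and $(2)_\ell{+}(4)_\ell$ with $\ell\geq 1$). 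Your algebraic argument, by contrast, runs entirely through the closed form \eqref{Cal-exp}, and its two key Stirling identities are sound: expanding $\sts{i+1}{s+1}=(s+1)\sts{i}{s+1}+\sts{i}{s}$ and using $\sum_{i=0}^{n}\binom{n}{i}\sts{i}{p}=\sts{n+1}{p+1}$ does give $\sum_{i=0}^{n-1}\binom{n}{i}\sts{i+1}{s+1}=(s+1)\sts{n+1}{s+2}$, the second identity follows as you say by applying the first at indices $s$ and $s-1$, and the final recombination via $(s+1+m)\binom{s+m}{m}=(s+1)\binom{s+1+m}{m}$ and $\sts{k+1}{r+1}=(r+1)\sts{k}{r+1}+\sts{k}{r}$ checks out (the boundary term $r=0$ is harmless since $\sts{k}{0}=0$ for $k>0$). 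What each approach buys: yours is self-contained at the level of identities and independent of any combinatorial model, which makes it mechanical to verify; the paper's proof yields bijective insight into \emph{why} the recurrence holds, which is the point of a paper whose stated purpose is to supply combinatorial models, and it is the template the authors then adapt to prove the weighted refinement for the Callan polynomials $C_n^k(x)$ in Theorem \ref{Callan-poly}, where a purely numerical identity argument would no longer suffice to track the weight statistic.
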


\begin{proof} 
	We count $m$-barred Callan sequences of size $n \times k$ according to the following cases. We let $|_\ell$ denote $\ell$ consecutive bars.
	\begin{itemize}
		\item[(0)] $|_m (\kek{1}, \kek{2}, \dots, \kek{k}, \kek{*}; \piros{1}, \piros{2}, \dots, \piros{n}, \piros{*})$.
		\item[$(1)$] $(\kek{1}, \kek{B}; \piros{R})$ is the first ordinary Callan pair with $\kek{B} \neq \emptyset$.
		\item[$(2)_\ell$] $|_\ell (\kek{1}; \piros{R})$ is the first ordinary Callan pair.
		\item[$(3)_\ell$] $(\kek{B'}; \piros{R})|_\ell (\kek{1}, \kek{B}; \piros{R'})$ for some $(\kek{B'}; \piros{R})$ and $\kek{B} \neq \emptyset$.
		\item[$(4)_0$] $(\kek{B'}; \piros{R}) |_0 (\kek{1}; \piros{R'})$ for some $(\kek{B'}; \piros{R})$.
		\item[$(4)_\ell$] $(\kek{B'}; \piros{R'}) |_\ell (\kek{1}; \piros{R})$ for some $\ell > 0$ and $(\kek{B'}; \piros{R'})$.
	\end{itemize}

	The cases (0) and $(1)$ are in bijection with $m$-barred Callan sequences of size $n \times (k-1)$ by deleting $\kek{1}$. So the number of such cases is $\mathcal{C}_n^{k-1} (m)$.
	
	Next, we consider the cases $(2)_0$, $(3)_\ell$, and $(4)_0$. In these cases, we delete $\kek{1}$ and $\piros{R}$, and insert the additional number $\piros{0}$ as follows. We assume that $\piros{R}$ contains $j$ elements. ($1 \leq j \leq n$).
	\begin{itemize}
		\item[$(2)_0$] Insert $\piros{0}$ into the extra red block.
			\[
				|_0 (\kek{1}; \piros{R}) |_{\ell'} (\kek{B'}; \piros{R'}) \cdots (\piros{R''}, \piros{*}; \kek{B''}, \kek{*}) \leftrightarrow |_{\ell'} (\kek{B'}; \piros{R'}) \cdots (\piros{0}, \piros{R''}, \piros{*}; \kek{B''}, \kek{*}).
			\]
			This gives $m$-barred Callan sequences of size $(n-j+1) \times (k-1)$ such that $\piros{0}$ is in the extra pair.
		\item[$(3)_\ell$] Replace $\piros{R}$ with $\piros{0}$.
			\[
				(\kek{B'}; \piros{R}) |_\ell (\kek{1}, \kek{B}; \piros{R'}) \leftrightarrow (\kek{B'}; \piros{0}) |_\ell (\kek{B}; \piros{R'}).
			\]
			This gives $m$-barred Callan sequences of size $(n-j+1) \times (k-1)$ such that $\piros{0}$ is alone in an ordinary pair.
		\item[$(4)_0$] Replace $\piros{R}$ with $\piros{0}$, and merge with $\piros{R'}$.
			\[
				(\kek{B'}; \piros{R}) |_0 (\kek{1}; \piros{R'}) \leftrightarrow (\kek{B'}; \piros{0}, \piros{R'}).
			\]
			This gives $m$-barred Callan sequences of size $(n-j+1) \times (k-1)$ such that the block that contains $\piros{0}$ includes also other red elements.
	\end{itemize}
	Clearly, the number of ways to create the $\piros{R}$ with $j$ elements is ${n \choose j}$. Thus, the number of patterns in the cases $(2)_0$, $(3)_\ell$ ($0 \leq \ell \leq m$), and $(4)_0$ is 
	\[
		\sum_{j=1}^n {n \choose j} \mathcal{C}_{n-j+1}^{k-1}(m).
	\]
	
	Finally, consider the remaining cases $(2)_\ell$ and $(4)_\ell$ with $1 \leq \ell \leq m$. If we delete the pair $(\kek{1}; \piros{R})$, we obtain $m$-barred Callan sequences of size $(n-j) \times (k-1)$. However, we obtain the same sequence $m$-times since $(\kek{1}; \piros{R})$ could have been after any bar. Indeed, conversely, take an $m$-barred Callan sequence of size $(n-j) \times (k-1)$ and insert the pair $(\kek{1}; \piros{R})$ after any bar. Thus, now we have 
	\[
		m \sum_{j=1}^n {n \choose j} \mathcal{C}_{n-j}^{k-1} (m).
	\]
	This concludes the proof.
\end{proof}

We give another type of recursion. Let $\widehat{\scB}_n^k(m;r)$ denote the number of $m$-barred Callan sequences with $r$ ordinary blocks. Then we have the following recursion. 
\begin{theorem} For positive integers $n,k >0$ and $m \geq 0$, it holds
\begin{align*}
\widehat{\scB}_n^{k}(m)=\sum_{j=1}^{n}\binom{n}{j}\sum_{r=0}^{\min{(n-j,k-1)}}(m+r+1)\widehat{\scB}_{n-j}^{k-1}(m;r)+
\sum_{r=0}^{\min{(n,k-1)}}(r+1)\widehat{\scB}_n^{k-1}(m; r).
\end{align*}
\end{theorem}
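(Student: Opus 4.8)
The plan is to establish the recurrence by a direct bijective argument that singles out the largest blue element $\kek{k}$ and sorts every $m$-barred Callan sequence of size $n\times k$ according to the blue block that contains it. Because the entries of each block are recorded in increasing order, $\kek{k}$ always occupies the last position of its blue block, so deleting it and reinserting it later are unambiguous operations. I would split the sequences into two mutually exclusive and exhaustive families: those in which $\kek{k}$ shares its blue block with at least one other blue element, and those in which $\{\kek{k}\}$ is a singleton ordinary blue block. I expect the first family to account for the second sum and the second family for the first sum.

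For the first family I would simply delete $\kek{k}$. Since its block stays nonempty, the result is an $m$-barred Callan sequence of size $n\times(k-1)$ with the same number $r$ of ordinary blocks. Reversing this, given such a sequence I may append $\kek{k}$ to any of its $r$ ordinary blue blocks or to the extra block $B^*$, which is $r+1$ choices; summing over $r$ produces
\begin{align*}
\sum_{r=0}^{\min(n,k-1)}(r+1)\,\widehat{\scB}_n^{k-1}(m;r).
\end{align*}

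For the second family, $\kek{k}$ is the blue block of an ordinary pair $(\kek{k};\piros{R})$ with $\piros{R}$ nonempty, say $|\piros{R}|=j$ and $1\le j\le n$. I would delete this entire pair and relabel the remaining $n-j$ red elements order-preservingly, obtaining an $m$-barred Callan sequence of size $(n-j)\times(k-1)$ with $r$ ordinary blocks. To run the construction backwards I first choose the $j$ elements of $\piros{R}$ among $\{\piros{1},\dots,\piros{n}\}$ in $\binom{n}{j}$ ways, and then reinsert the new pair into the ordinary part of a size $(n-j)\times(k-1)$ sequence. The crucial point is that this ordinary part is one linear string built from the $r$ ordinary pairs and the $m$ bars, a total of $r+m$ tokens, so the new pair has $r+m+1$ admissible gaps. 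This yields
\begin{align*}
\sum_{j=1}^{n}\binom{n}{j}\sum_{r=0}^{\min(n-j,k-1)}(m+r+1)\,\widehat{\scB}_{n-j}^{k-1}(m;r),
\end{align*}
and adding the two families gives the claimed identity.

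The step I expect to be the main obstacle is justifying the factor $m+r+1$ rather than the naive $r+1$. The subtlety is that the bars and the ordinary pairs inhabit the same linear arrangement and that several bars may occupy one gap, so the reinsertion of the new pair must treat each of the $m$ bars as a separate token, giving $r+m+1$ positions. I would also take care to verify that the singleton blue block $\{\kek{k}\}$ is always identifiable in the reconstructed sequence, so that the two families genuinely partition the set of $m$-barred Callan sequences of size $n\times k$ with no overlap and nothing omitted.
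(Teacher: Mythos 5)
Your proof is correct and takes essentially the same approach as the paper: both arguments split on whether $\kek{k}$ forms a singleton ordinary blue block, then count reinsertions --- $r+1$ choices of an existing blue block (ordinary or extra) in one case, and $m+r+1$ gaps in the linear arrangement of the $r$ ordinary pairs and $m$ bars in the other, with the $\binom{n}{j}$ factor choosing the red partner block. Your justification of the factor $m+r+1$ by treating each bar as a separate token in the linear string is exactly the reasoning the paper relies on.
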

\begin{proof}
Consider an $m$-barred Callan sequence. There are two cases: $\kek{k}$ is in an ordinary pair as a singleton, or not, i.e., it is in an ordinary pair with other elements or in the extra pair. If it is in an ordinary pair as a singleton, let $j$ be the number of the red elements in this pair. Choose in $\binom{n}{j}$ ways such a Callan pair. Since it is an ordinary pair, $j$ is at least $1$. This new block can be inserted into the arrangement of the ordinary blocks and bars formed by the $m$-barred Callan sequence of size $(n-j) \times (k-1)$ with $r$ ordinary blocks, i.e., in $m+r+1$ ways. This gives the first part of our sum. 

On the other hand, if we insert $\kek{k}$ into any block that contains a blue element already, or into the extra block, that can be done in $r+1$ ways, which gives the second part of the sum.  
\end{proof}

%--------------------------------------------------

\section{Weighted barred Callan sequences}

%--------------------------------------------------

In this section we present a combinatorial interpretation, which allows us to extend the number that counted in our previous model the bars inserted between the Callan pairs, to arbitrary numbers. 

For this sake we introduce first a weight on permutations. Let $\pi$ be a permutation $\pi=\pi_1\pi_2\ldots\pi_n \in \mathfrak{S}_n$. Consider the maximal sequence $\pi_{i_0}>\pi_{i_1}>\pi_{i_2}>\cdots>\pi_{i_r}$, where $\pi_{i_0}=\pi_1$ and $\pi_{i_{j+1}}$ is the first element to the right of $\pi_{i_j}$ that is smaller for all $j$. Let $w(\pi)=r$, i.e.,  the length of this maximal sequence reduced by $1$. In other words, considering the elements of the permutation from left to right mark an element if it is smaller than the previous marked element. Then $w(\pi)$ is the number of marked elements reduced by one. For instance, for $\pi=\piros{8}\piros{6}9\piros{5}7\piros{2}34\piros{1}$ $w(\pi)=4$. Let $x^{\overline{n}}=x(x+1)(x+2)\cdots(x+n-1)$ denote the rising factorial. We have the following lemma.
\begin{lemma}
	\[
		\sum_{\pi \in \mathfrak{S}_n} x^{w(\pi)} = (x+1)^{\overline{n-1}}.
	\]
\end{lemma}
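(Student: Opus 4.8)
The plan is to re-read the statistic $w(\pi)$ in terms of left-to-right minima and then set up a one-term recurrence in $n$ by inserting the largest letter.

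First I would observe that the marked elements in the construction of $w(\pi)$ are exactly the \emph{left-to-right minima} of $\pi$, i.e.\ those $\pi_i$ with $\pi_i < \pi_j$ for all $j < i$. Indeed, $\pi_1$ is always marked and is always a left-to-right minimum, and inductively the next marked element is the first entry strictly smaller than the current running minimum, which is precisely the next left-to-right minimum. Hence $w(\pi) + 1$ equals the number of left-to-right minima of $\pi$. Writing $W_n(x) := \sum_{\pi \in \mathfrak{S}_n} x^{w(\pi)}$, it then suffices to control how the number of left-to-right minima behaves under a natural insertion map.

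Next I would build every $\pi \in \mathfrak{S}_n$ from a unique $\tau \in \mathfrak{S}_{n-1}$ by inserting the largest letter $n$ into one of the $n$ available slots. The key point is the following case analysis. If $n$ is placed at the very front, then $n$ itself becomes a (vacuous) left-to-right minimum and, since every entry of $\tau$ is smaller than $n$, all previous left-to-right minima survive, so the count goes up by exactly $1$. If $n$ is placed in any of the other $n-1$ slots, then $n$ is never a left-to-right minimum (a smaller letter precedes it) and, being the largest letter, its insertion does not change the minimality status of any other entry, so the count is unchanged. Consequently each $\tau$ contributes $x \cdot x^{w(\tau)}$ from the front insertion and $(n-1)\,x^{w(\tau)}$ from the remaining insertions, yielding the recurrence
\[
 W_n(x) = (x + n - 1)\, W_{n-1}(x), \qquad W_1(x) = 1.
\]

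Finally I would unwind this recurrence to obtain $W_n(x) = \prod_{i=2}^{n}(x + i - 1) = (x+1)(x+2)\cdots(x+n-1) = (x+1)^{\overline{n-1}}$, as claimed. I expect the only delicate step to be the insertion case analysis above, in particular verifying that prepending $n$ creates precisely one new left-to-right minimum and that interior insertions are neutral; everything else is bookkeeping. As a cross-check and an alternative route, the very same argument shows that the number of $\pi \in \mathfrak{S}_n$ with $w(\pi) = j-1$ equals the signless Stirling number of the first kind $\st{n}{j}$, so the identity is also equivalent to the classical generating function $\sum_{j} \st{n}{j} x^j = x^{\overline{n}}$ after dividing by $x$.
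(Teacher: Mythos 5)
Your proof is correct and takes essentially the same approach as the paper: insert the largest letter $n$ into a permutation of $\{1,\dots,n-1\}$, observe that only the front insertion creates a new marked element (left-to-right minimum), and deduce the recurrence $W_n(x) = (x+n-1)W_{n-1}(x)$ with $W_1(x)=1$. Your write-up is in fact more careful than the paper's, whose displayed recurrence contains an off-by-one slip (it reads $(x+n)$ where $(x+n-1)$ is meant, as the verbal argument and your case analysis both show).
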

\begin{proof}
The left-hand side obeys the same recurrence as the right-hand side. The initial value is
$\sum_{\pi \in \mathfrak{S}_{1}} x^{w(\pi)} = 1$. By inserting the element $n$ into a permutation $\pi\in \mathfrak{S}_{n-1}$ the weight is increasing by one if we add it in the front as starting element, and stays preserved otherwise. Hence, 
\[
	\sum_{\pi\in \mathfrak{S}_{n}} x^{w(\pi)} = (x+n) \sum_{\pi \in \mathfrak{S}_{n-1}} x^{w(\pi)}.
\] 
\end{proof}
\begin{example}
\begin{align*}
		&\textcolor{red}{1}234, \textcolor{red}{1}243, \textcolor{red}{1}324, \textcolor{red}{1}342, \textcolor{red}{1}423, \textcolor{red}{1}432, \textcolor{red}{21}34, \textcolor{red}{21}43, \textcolor{red}{2}3\textcolor{red}{1}4, \textcolor{red}{2}34\textcolor{red}{1}, \textcolor{red}{2}4\textcolor{red}{1}3, \textcolor{red}{2}43\textcolor{red}{1},\\
		 &\textcolor{red}{31}24, \textcolor{red}{31}42, \textcolor{red}{321}4, \textcolor{red}{32}4\textcolor{red}{1}, \textcolor{red}{3}4\textcolor{red}{1}2, \textcolor{red}{3}4\textcolor{red}{21}, \textcolor{red}{41}23, \textcolor{red}{41}32, \textcolor{red}{421}3, \textcolor{red}{42}3\textcolor{red}{1}, \textcolor{red}{431}2, \textcolor{red}{4321}
	\end{align*}
We have
	\[
		\sum_{\pi \in \mathfrak{S}_4} x^{w(\pi)} = x^3 + 6x^2 + 11x + 6 = (x+1)(x+2)(x+3).
	\]
\end{example}   

We define now a weight on a $1$-barred Callan sequence (from now on barred Callan sequence) using the weight on permutations above. Let $\mathcal{B}_n^k$ denote the set of barred Callan sequences of size $n \times k$ and $\alpha\in \mathcal{B}_n^k$. The natural order of the blocks in a partition $\sigma=B_1/B_2/\ldots/B_n$ is given by the least elements. For instance, the blocks of the partition $\{1,3,9\}/\{2,4,7\}/\{5,6\}/\{8\}$ are listed in the natural order. We consider now the set of blue blocks of the Callan sequence with this natural order and add $|$ as the smallest element to the set. The weight $w(\alpha)$ is the weight of the permutation of the blue blocks (and the bar) in the barred Callan sequence $\alpha$.  

\begin{example} [All $1$-barred Callan sequences with $n = 2$ and $k = 2$ with indication of their weight]
	\begin{align*}
		&\underline{|}(\kek{1}, \kek{2}, \kek{*}; \piros{1}, \piros{2}, \piros{*}), & &\underline{|}(\kek{1}, \kek{2}; \piros{1}, \piros{2})(\kek{*};\piros{*}) & &\underline{|}(\kek{1}; \piros{1}, \piros{2})(\kek{2}, \kek{*}; \piros{*}), & &\underline{|}(\kek{2}; \piros{1}, \piros{2})(\kek{1}; \kek{*}; \piros{*}), & &\underline{|}(\kek{1}, \kek{2}; \piros{1}) (\kek{*}; \piros{2}, \piros{*}),\\
		&\underline{|}(\kek{1}; \piros{1})(\kek{2},\kek{*}; \piros{2}, \piros{*}), & &\underline{|}(\kek{2}; \piros{1})(\kek{1}, \kek{*}; \piros{2}, \piros{*}), & &\underline{|}(\kek{1}, \kek{2}; \piros{2}) (\kek{*}; \piros{1}, \piros{*}), & &\underline{|}(\kek{1}; \piros{2}) (\kek{2}, \kek{*}; \piros{1}, \piros{*}), & &\underline{|}(\kek{2}; \piros{2})(\kek{1}, \kek{*}; \piros{1}, \piros{*}),\\
		&\underline{|}(\kek{1}; \piros{1})(\kek{2}; \piros{2})(\kek{*}; \piros{*}), & &\underline{|}(\kek{2}; \piros{1})(\kek{1}; \piros{2})(\kek{*}; \piros{*}), & &\underline{|}(\kek{1};\piros{2})(\kek{2}; \piros{1})(\kek{*}; \piros{*}), & &\underline{|}(\kek{2};\piros{2})(\kek{1}; \piros{1}) (\kek{*}; \piros{*}), & &(\underline{\kek{1}}, \kek{2}; \piros{1}, \piros{2})\underline{|} (\kek{*};\piros{*}),\\
		&(\underline{\kek{1}}; \piros{1}, \piros{2}) \underline{|} (\kek{2}, \kek{*}; \piros{*}), & &(\underline{\kek{2}}; \piros{1}, \piros{2}) \underline{|} (\kek{1}; \kek{*}; \piros{*}), & &(\underline{\kek{1}}, \kek{2}; \piros{1}) \underline{|} (\kek{*}; \piros{2}, \piros{*}), & &(\underline{\kek{1}}; \piros{1}) \underline{|} (\kek{2},\kek{*}; \piros{2}, \piros{*}), & &(\underline{\kek{2}}; \piros{1}) \underline{|} (\kek{1}, \kek{*}; \piros{2}, \piros{*}),\\
		&(\underline{\kek{1}}, \kek{2}; \piros{2}) \underline{|} (\kek{*}; \piros{1}, \piros{*}), & &(\underline{\kek{1}}; \piros{2}) \underline{|} (\kek{2}, \kek{*}; \piros{1}, \piros{*}), & &(\underline{\kek{2}}; \piros{2}) \underline{|} (\kek{1}, \kek{*}; \piros{1}, \piros{*}), & &(\underline{\kek{1}}; \piros{1}) \underline{|} (\kek{2}; \piros{2})(\kek{*}; \piros{*}), & &(\underline{\kek{2}}; \piros{1}) \underline{|} (\kek{1}; \piros{2})(\kek{*}; \piros{*}),\\
		&(\underline{\kek{1}};\piros{2}) \underline{|} (\kek{2}; \piros{1})(\kek{*}; \piros{*}), & &(\underline{\kek{2}};\piros{2}) \underline{|} (\kek{1}; \piros{1}) (\kek{*}; \piros{*}), & &(\underline{\kek{1}}; \piros{1})  (\kek{2}; \piros{2}) \underline{|} (\kek{*}; \piros{*}), & &(\underline{\kek{2}}; \piros{1}) (\underline{\kek{1}}; \piros{2}) \underline{|} (\kek{*}; \piros{*}), & &(\underline{\kek{1}};\piros{2}) (\kek{2}; \piros{1}) \underline{|} (\kek{*}; \piros{*}),\\
		&(\underline{\kek{2}};\piros{2})(\underline{\kek{1}}; \piros{1}) \underline{|} (\kek{*}; \piros{*}).
	\end{align*}
\end{example}

\begin{definition}
We define the \emph{Callan polynomial} for any positive integers $n$ and $k$ as
\[C_n^k(x)=\sum_{\alpha\in \mathcal{B}_n^k}x^{w(\alpha)}.\]
\end{definition}

By the above example, we see that $C_2^2(x) = 2x^2 + 15x + 14$.

\begin{proposition}\label{C-exp}
The polynomials $C_n^k(x)$ are given by
\[C_n^k(x)=\sum_{j=0}^{\min(n,k)}j!(x+1)^{\overline{j}}\sts{n+1}{j+1}\sts{k+1}{j+1}.\]
\end{proposition}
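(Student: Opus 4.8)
The plan is to stratify the set $\mathcal{B}_n^k$ of barred Callan sequences according to the number $r$ of ordinary Callan pairs, which ranges over $0 \leq r \leq \min(n,k)$ since every block is nonempty, and to show that for each fixed $r$ the polynomial $\sum_{\alpha} x^{w(\alpha)}$ factors into three independent contributions. First I would observe that a barred Callan sequence with exactly $r$ ordinary pairs is assembled from the following pieces of data: (a) a set partition of the red set $N=\{\piros{1},\dots,\piros{n},\piros{*}\}$ into $r+1$ blocks, the one containing $\piros{*}$ being the extra block $R^*$, giving $\sts{n+1}{r+1}$ choices; (b) a set partition of the blue set $K$ into $r+1$ blocks, giving $\sts{k+1}{r+1}$ choices; (c) a linear arrangement of the $r$ ordinary blue blocks together with the single bar; and (d) an assignment of the $r$ ordinary red blocks to the $r$ pair-slots fixed by (c), giving $r!$ choices. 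Since (a)--(d) reconstruct each element of $\mathcal{B}_n^k$ exactly once (the extra pair being appended at the end), this is a genuine bijection.

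The crucial point is that, by the very definition of $w(\alpha)$, the weight depends only on the arrangement in (c) and is completely insensitive to the choices in (a), (b), and (d). Ordering the $r$ ordinary blue blocks by their least elements and declaring the bar to be the smallest symbol identifies the arrangement in (c) with a permutation $\sigma$ of the totally ordered $(r+1)$-element set, i.e. with an element of $\mathfrak{S}_{r+1}$, and this identification is weight-preserving, $w(\alpha)=w(\sigma)$. Consequently the contribution of the stratum with $r$ ordinary pairs factors as
\[
	\sts{n+1}{r+1}\,\sts{k+1}{r+1}\, r!\,\sum_{\sigma\in\mathfrak{S}_{r+1}} x^{w(\sigma)}.
\]
Applying the Lemma with $n$ replaced by $r+1$ gives $\sum_{\sigma\in\mathfrak{S}_{r+1}} x^{w(\sigma)}=(x+1)^{\overline{r}}$, and summing over $r$ (renamed $j$) yields exactly the claimed formula.

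The step I expect to be the main obstacle is the careful verification that (c) and (d) genuinely decouple, that is, that fixing the blue-block-and-bar arrangement leaves the red blocks free to be assigned in all $r!$ ways without altering the weight, and that the resulting decomposition enumerates each barred Callan sequence precisely once (in particular, that the extra block $B^*$ is correctly excluded from the weight permutation). A useful sanity check along the way is the unweighted specialization $x=1$: there $(x+1)^{\overline{r}}=(r+1)!$, so the stratum contributes $(r+1)(r!)^2\sts{n+1}{r+1}\sts{k+1}{r+1}$, recovering the $m=1$ instance of the closed form \eqref{Cal-exp} for $\mathcal{C}_n^k(1)$, which confirms that the bookkeeping is correct.
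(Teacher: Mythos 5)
Your proposal is correct and is precisely the argument the paper leaves implicit: its proof of Proposition~\ref{C-exp} simply says the formula is ``straightforward from the definition of barred Callan sequences and the definition of the weight,'' and your stratification by the number $r$ of ordinary pairs, the factorization into $\sts{n+1}{r+1}\sts{k+1}{r+1}\,r!$ weight-independent choices times the blue-blocks-and-bar arrangement, and the appeal to the Lemma giving $(x+1)^{\overline{r}}$ is exactly that straightforward argument spelled out. The $x=1$ sanity check against \eqref{Cal-exp} is a nice confirmation, so nothing further is needed.
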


\begin{proof}
It is straightforward from the definition of barred Callan sequences and the definition of the weight. 
\end{proof}

Next, we show the recursion by modifying the proof appropriately in the previous section.
We define $C_n^0(x)=C_0^k(x)=1$. 

\begin{theorem}\label{Callan-poly} For any integers $n \geq 0$ and $k > 0$, we have 
\begin{align}\label{recperm}
C_n^{k}(x)=C_{n}^{k-1}(x)+\sum_{j=1}^n\binom{n}{j}C_{n-j+1}^{k-1} (x) +x\sum_{j=1}^{n}\binom{n}{j}C_{n-j}^{k-1} (x).
\end{align}
\end{theorem}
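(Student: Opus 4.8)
The plan is to mirror the combinatorial proof of the three-term recursion for $\mathcal{C}_n^k(m)$ from the previous section, now tracking the weight $w(\alpha)$ in place of the number of bars. The first step is to record a convenient description of the weight: since the bar is the smallest symbol, it is always a left-to-right minimum and no block placed after it can be one, so $w(\alpha)$ equals the number of ordinary blue blocks lying to the left of the bar that are left-to-right minima in the natural order (the extra block, sitting after the bar, never contributes). With this reading I would split $\mathcal{B}_n^k$ according to the position of $\kek{1}$ exactly as in the proof of the recursion for $\mathcal{C}_n^k(m)$, observing that with a single bar the parameter $\ell$ there now runs only over $\{0,1\}$.

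I would then aim to match the three groups of configurations to the three terms on the right-hand side: the cases where $\kek{1}$ lies in the extra block or shares an ordinary block with larger blue elements should give $C_n^{k-1}(x)$; the cases treated by the $\piros{0}$-relabelling should give $\sum_{j=1}^n\binom{n}{j}C_{n-j+1}^{k-1}(x)$; and the cases in which a blue singleton is inserted immediately before the bar should give the last term, the factor $x$ appearing because such an insertion creates exactly one new left-to-right minimum just before the bar and so raises $w$ by one. This is the weighted analogue of the factor $m$, which previously counted the bar after which the singleton pair could be placed.

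The main obstacle is that, unlike the plain counting in the previous section, these maps need not be weight-preserving term by term: deleting $\kek{1}$ from a block it shares lowers that block's least element, hence changes its rank and can reshuffle which of the remaining blocks are left-to-right minima before the bar, so $w$ may jump by more than the intended amount. Establishing that the weighted contributions nonetheless agree in aggregate is the delicate point. For this reason the route I would actually use to finish is interpolation: comparing Proposition \ref{C-exp} with \eqref{Cal-exp} gives $C_n^k(m)=\mathcal{C}_n^k(m)$ for every non-negative integer $m$, since both equal $\sum_{r}r!\,(m+1)^{\overline{r}}\sts{n+1}{r+1}\sts{k+1}{r+1}$ (using $\binom{r+m}{m}(r!)^2=r!\,(m+1)^{\overline{r}}$). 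Hence the asserted identity holds at every $x=m$ by the recursion already proved for $\mathcal{C}_n^k(m)$, and as both sides are polynomials in $x$ agreeing at infinitely many points, it holds identically.
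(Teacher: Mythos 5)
Your proof is correct, but it takes a genuinely different route from the paper's. You rightly identified the obstacle in transplanting the Section 2 decomposition: that argument deletes $\kek{1}$, and removing $\kek{1}$ from a blue block changes that block's least element, which can reshuffle the natural order of the blue blocks and hence the set of left-to-right minima, so the maps are not weight-preserving. The paper's proof fixes exactly this by decomposing according to the position of $\kek{k}$, the \emph{largest} blue element, instead: since the natural order of blue blocks is determined by least elements, inserting $\kek{k}$ into the extra block or into an existing ordinary blue block never alters the order (weight preserved), while a singleton block $\{\kek{k}\}$ is the greatest block, so gluing the pair $(\kek{k};\piros{R_1})$ at the front of the sequence raises the weight by exactly one (producing the factor $x$), and inserting it after some existing pair---with bookkeeping via a placeholder $\piros{0}$ as in Section 2---leaves the weight unchanged. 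This yields a weight-tracking combinatorial proof, in the spirit of the paper's program. Your fallback interpolation argument---establishing $C_n^k(m) = \mathcal{C}_n^k(m)$ for every integer $m \geq 0$ by comparing Proposition \ref{C-exp} with \eqref{Cal-exp} through $\binom{r+m}{m}(r!)^2 = r!\,(m+1)^{\overline{r}}$, invoking the Section 2 recursion for $\mathcal{C}_n^k(m)$ at each such $m$, and concluding by the identity theorem for polynomials agreeing at infinitely many points---is valid and non-circular, since both ingredients precede Theorem \ref{Callan-poly} and are proved independently of it. It is shorter, but purely algebraic: it buys the identity at the cost of combinatorial insight, making the theorem a formal consequence of the closed form rather than a structural statement about the weight on barred Callan sequences.
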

\begin{proof}
We split the set $\mathcal{B}_n^k$  into disjoint subsets as follows:
 Let $A$ denote the set $\alpha\in \mathcal{B}_n^k$ such that $\kek{k}$ is in the extra pair with $\kek{*}$. Let $B$ denote the set $\beta\in \mathcal{B}_{n}^k$ such that $\kek{k}$ is in the first Callan pair alone and there is no bar before it. Let $C$ denote the set $\gamma\in \mathcal{B}_n^k$ such that $\kek{k}$ is in an ordinary block. Further, if it is alone in the first Callan pair, then the bar is before it.

If $\kek{k}$ is in the extra blue block $B^*$, we simply take a barred Callan sequence with $k-1$ blue elements and $n$ red elements and insert $\kek{k}$ into the extra block. The extra block does not affect the weight. Thus, we have
\[\sum_{\alpha\in A}x^{w(\alpha)}=C_{n}^{k-1}(x).\]

We obtain a Callan sequence $\beta\in B$ by choosing in $\binom{n}{j}$ ways $j$ red elements for the first Callan pair $(\kek{k}; \piros{R_1})$, and constructing from the remaining $n-j$ red elements and $k-1$ blue elements a barred Callan sequence. $(\kek{k}; \piros{R_1})$ is glued simply before the sequence. The weight will be increased by one, since the block $(\kek{k}; \piros{R_1})$ is the greatest among the blocks. Hence, we have
\[\sum_{\beta\in B}x^{w(\beta)}=x\sum_{j=1}^{n}\binom{n}{j}C_{n-j}^{k-1}(x).\]

We split the set $C$ into further disjoint subsets as follows. $C_1$ are the Callan sequences, where $\kek{k}$ is alone in its ordinary  block and the bar is directly before it. $C_2$ consists of the Callan sequences, where $\kek{k}$ is alone, a bar is not before it and it is not in the first Callan pair. Finally, $C_3$ are the Callan sequences, where $\kek{k}$ is not alone in its blue block. Clearly, $C=C_1\dot{\cup}C_2\dot{\cup}C_3$.

Choose again $j$ red elements in $\binom{n}{j}$ ways for $\kek{k}$ to create a block $(\kek{k};\widehat{\piros{R}})$. Construct a barred Callan sequence with $([n] \backslash \widehat{\piros{R}} )\cup \{\piros{0}\}$ red elements and $[k-1]$ blue elements. We have three cases:

If $\piros{0}$ is in the extra block, delete $\piros{0}$ and insert $(\kek{k};\widehat{\piros{R}})$ directly after the bar. In this case we obtain the set $C_1$. The weight does not change, since $\kek{k}$ is ``greater'' than $|$.  

If $\piros{0}$ is in an ordinary block and there is no other red element in its block, merge $(\kek{k};\widehat{\piros{R}})$ to this Callan pair by $(\kek{B}; \piros{0}) \to (\kek{B}, \kek{k}; \widehat{\piros{R}})$. Do not change the position of the bar. This case gives the set $C_3$. The weight does not change, since the so obtained blue block contains smaller elements than $\kek{k}$, and the order of the blocks are determined by their least elements.

If $\piros{0}$ is in an ordinary pair, say $(\kek{B}; \piros{0}, \piros{R})$ and this block contains other red elements also, then delete $\piros{0}$ and insert $(\kek{k};\widehat{\piros{R}})$ after this Callan pair, that is, $(\kek{B}; \piros{0}, \piros{R}) \to (\kek{B}; \piros{R}) (\kek{k}; \widehat{\piros{R}})$. If the bar was directly after this pair $(\kek{B}; \piros{0}, \piros{R})$, then delete it from here and place it now after $(\kek{k};\widehat{\piros{R}})$. This case gives the set $C_2$. The weight does not change since there is a block with smaller value (respecting to the order of blocks) to the left of the block $(\kek{k};\widehat{\piros{R}})$, hence, $(\kek{k};\widehat{\piros{R}})$ does not affect the weight anymore.

We have
\[\sum_{\gamma\in C}x^{w(\gamma)}=\sum_{j=1}^n\binom{n}{j}C_{n-j+1}^{k-1}(x),\]
which concludes the proof.
\end{proof}
\begin{corollary}
	For any integers $n, k \geq 0$ and $m \geq 0$, we have
	\[
		C_n^k(m) = \mathcal{C}_n^k(m) = \widehat{\scB}_n^k (m).
	\]
\end{corollary}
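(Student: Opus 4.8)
The plan is to reduce the corollary to the single new identity $C_n^k(m)=\mathcal{C}_n^k(m)$, since the second equality $\mathcal{C}_n^k(m)=\widehat{\scB}_n^k(m)$ is exactly the content of the first theorem of Section~2, and the edge cases $n=0$ or $k=0$ are consistent (all three quantities equal $1$ by the stated conventions). So everything comes down to comparing the Callan polynomial, evaluated at the integer $x=m$, with the barred-Callan count.

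For this I would simply place the two closed formulas side by side. Proposition~\ref{C-exp} gives
\[
C_n^k(m)=\sum_{j=0}^{\min(n,k)}j!\,(m+1)^{\overline{j}}\sts{n+1}{j+1}\sts{k+1}{j+1},
\]
while \eqref{Cal-exp} gives
\[
\mathcal{C}_n^k(m)=\sum_{r=0}^{\min(n,k)}\binom{r+m}{m}(r!)^2\sts{n+1}{r+1}\sts{k+1}{r+1}.
\]
These two sums are indexed identically and carry the same Stirling factors, so it suffices to check that the remaining scalar factors agree term by term, i.e. that $j!\,(m+1)^{\overline{j}}=\binom{j+m}{m}(j!)^2$ for every $j\ge 0$. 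Using the definition $x^{\overline{j}}=x(x+1)\cdots(x+j-1)$ of the rising factorial, the left factor is $(m+1)^{\overline{j}}=(m+1)(m+2)\cdots(m+j)=(m+j)!/m!$, while $\binom{j+m}{m}j!=\frac{(j+m)!}{m!\,j!}\,j!=(m+j)!/m!$ as well; multiplying through by the common $j!$ gives the claim, so the two sums coincide.

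There is essentially no obstacle here: the only genuine content is the elementary rising-factorial identity $(m+1)^{\overline{j}}=\binom{j+m}{m}j!$, together with the bookkeeping that the summation ranges and the Stirling factors match verbatim. The one point requiring a moment's care is confirming that the boundary conventions $C_n^0(x)=C_0^k(x)=1$ line up with $\mathcal{C}_n^0(m)=\mathcal{C}_0^k(m)=1$ and with the $r=0$ term of \eqref{Cal-exp}, so that the identity is valid across the full range $n,k\ge 0$.

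Alternatively, and perhaps more in the spirit of the section, one can bypass the closed formulas and argue by induction on $k$. The base case $k=0$ (together with $n=0$) holds by the conventions $C_n^0(x)=\mathcal{C}_n^0(m)=1$. For the inductive step one observes that the recursion for $C_n^k(x)$ in Theorem~\ref{Callan-poly}, specialized to $x=m$, is literally the same recursion established for $\mathcal{C}_n^k(m)$ in the preceding theorem; applying the induction hypothesis $C_\bullet^{k-1}(m)=\mathcal{C}_\bullet^{k-1}(m)$ to every term on the right-hand side then yields $C_n^k(m)=\mathcal{C}_n^k(m)$. Either route finishes the corollary once the first theorem of Section~2 is invoked for the link to $\widehat{\scB}_n^k(m)$.
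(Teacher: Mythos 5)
Your proposal is correct and matches the paper's (implicit) argument: the corollary is stated without proof precisely because it follows immediately from the results you cite, either by matching the closed formulas of Proposition~\ref{C-exp} and \eqref{Cal-exp} via $j!\,(m+1)^{\overline{j}}=(j!)^2\binom{j+m}{m}$, or by noting that Theorem~\ref{Callan-poly} at $x=m$ is the same recursion (with the same initial conditions) as the one proved for $\mathcal{C}_n^k(m)$ in Section~2, and then invoking $\mathcal{C}_n^k(m)=\widehat{\scB}_n^k(m)$. Both routes you give are sound, and your check of the boundary conventions $n=0$ or $k=0$ is exactly the right point of care.
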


%------------------------------------------------

\section{Weighted alternative tableaux of rectangular shape} \label{s5}

%------------------------------------------------

In this section we introduce a weight on alternative tableaux of rectangular shapes and show that the so obtained polynomials are identical with the Callan polynomials, hence, the numbers of such tableaux are the normalized symmetrized poly-Bernoulli numbers. Alternative tableaux were introduced by Viennot \cite{Viennot}. The literature on alternative tableaux and related topics is extremely rich. For instance, a combinatorial interpretation of the generalized Dumont-Foata polynomial in terms of alternative tableaux was given in \cite{Josuat}.

\begin{definition} \cite[Definition 1.2]{N11}
	An \textit{alternative tableau} of rectangular shape of size $n \times k$ is a rectangle with a partial filling of the cells with left arrows $\leftarrow$ and down arrows $\downarrow$, such that all cells pointed by an arrow are empty. We let $\calT_n^k$ denote the set of all alternative tableaux of rectangular shape of size $n \times k$.
\end{definition}

\begin{example}\label{Ex-T}
	In Figure \ref{alttabl} we give an example of alternative tableaux of size $5 \times 6$ with its weight defined later.
	\begin{figure}[H]
		\centering
		\includegraphics[width=40mm]{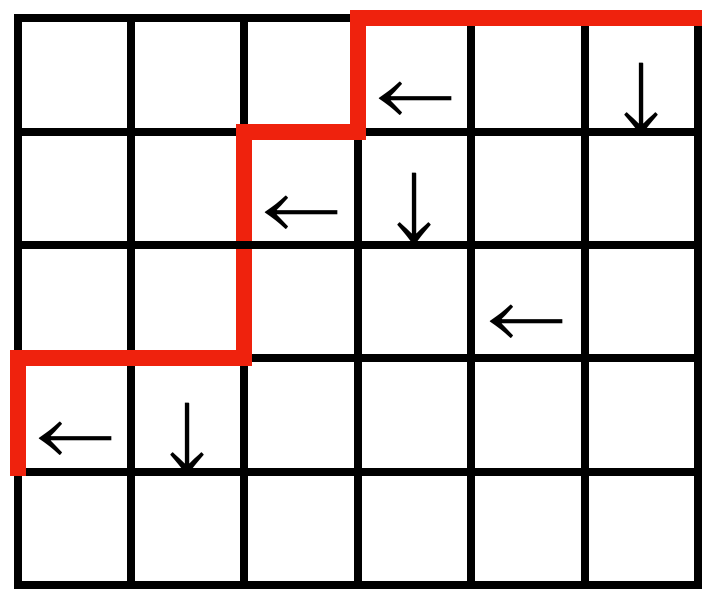} 
		\caption{An alternative tableaux of size $5 \times 6$}
		\label{alttabl}
	\end{figure}
\end{example}

 We introduce a weight on alternative tableaux as follows. For each $\lambda \in \calT_n^k$,
\begin{itemize}
	\item[1.] Consider the first (from the top) consecutive rows that contain left arrows $\leftarrow$.
	\item[2.] Count the number of left arrows $\leftarrow$ such that all $\leftarrow$ in the upper rows are located further to the right. 
\end{itemize}
We let $w(\lambda)$ denote the number of such left arrows. For instance, the alternative tableau in Figure \ref{alttabl} has the weight $w(\lambda) = 3$. In Figure \ref{altweight} we list all $31$ elements in $\calT_2^2$ with their weights.
\begin{figure}[H]
	\centering
	\includegraphics[width=160mm]{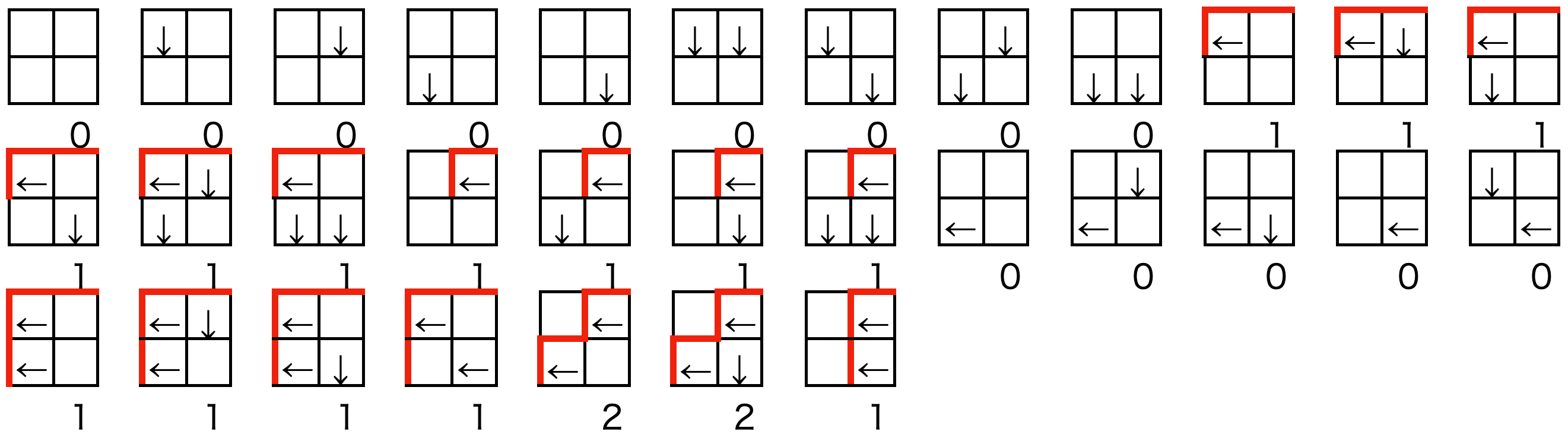}
	\caption{Alternative tableaux of size $2\times 2$ with their weights}\label{altweight}
\end{figure}

We define the polynomial $T_n^k(x)$ by
\[
	T_n^k(x) := \sum_{\lambda \in \calT_n^k} x^{w(\lambda)}.
\]
From the above example, $T_2^2(x) = 2x^2 + 15x + 14$, which coincides with the Callan polynomial $C_2^2(x)$. In general, the following holds.

\begin{theorem}
	We define $T_n^0(x) = T_0^k (x) = 1$. For any integers $n, k \geq 0$, the polynomial $T_n^k(x)$ coincides with the Callan polynomial $C_n^k(x)$.
\end{theorem}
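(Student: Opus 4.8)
The plan is to prove the identity by induction, showing that the tableau polynomial $T_n^k(x)$ obeys the same recursion and boundary data as the Callan polynomial $C_n^k(x)$. The boundary cases $T_n^0(x) = T_0^k(x) = 1$ hold by the stated convention, so by Theorem~\ref{Callan-poly} (equivalently Proposition~\ref{C-exp}) it suffices to verify that for $n \geq 0$, $k > 0$,
\[
	T_n^k(x) = T_n^{k-1}(x) + \sum_{j=1}^n \binom{n}{j} T_{n-j+1}^{k-1}(x) + x \sum_{j=1}^n \binom{n}{j} T_{n-j}^{k-1}(x).
\]
To do this I would peel off one distinguished boundary line of the rectangle so as to lower the index $k$ by one, and partition $\calT_n^k$ into three classes mirroring the sets $A$, $B$, $C$ used in the proof of Theorem~\ref{Callan-poly}.

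The three classes should match the three terms as follows. In the first class the peeled line is \emph{inert}: it carries no left arrow that participates in the top staircase and kills no interior structure, so deleting it lands bijectively in $\calT_n^{k-1}$ and leaves $w(\lambda)$ unchanged, giving $T_n^{k-1}(x)$. In the second class the peeled line forces a set of $j$ transverse lines to become empty or to merge; choosing them yields the factor $\binom{n}{j}$ and the reduction produces a tableau of size $(n-j+1) \times (k-1)$ with the weight again unchanged, since deleting lines preserves the left-to-right order of the remaining columns and hence all the ``further to the right'' comparisons that define $w(\lambda)$. In the third class the peeled line introduces a left arrow that becomes a new running extreme of the top staircase, contributing exactly $1$ to $w(\lambda)$; deleting it drops the weight by one and produces the factor $x$, while the same choice of $j$ lines gives $\binom{n}{j}$ and size $(n-j) \times (k-1)$.

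The weight bookkeeping rests on two observations, which I would isolate as small lemmas. First, deleting an empty or purely down-arrow line never removes a left arrow from the top staircase and preserves the horizontal order of columns, so $w(\lambda)$ is unchanged. Second, a left arrow placed so as to be a new extreme of the topmost occupied rows contributes exactly $1$ and is independent of the remainder of the staircase; this is precisely the ``greatest block in front'' mechanism used for the set $B$ in Theorem~\ref{Callan-poly}, and it is the same mechanism by which inserting the largest element at the front raises the permutation statistic in the Lemma $\sum_{\pi} x^{w(\pi)} = (x+1)^{\overline{n-1}}$.

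The main obstacle is the interaction between the emptiness constraints of alternative tableaux and the non-local definition of the weight. A peeled line carrying a down arrow forces an entire transverse line to be empty, and the classification must be arranged so that these forced-empty lines are absorbed into the $\binom{n}{j}$ choices rather than producing spurious terms; one must also check that each deletion is a genuine bijection onto the smaller tableau set, invertible by re-inserting the line subject to the ``no arrow points to a filled cell'' condition, and that $w(\lambda)$ changes by exactly $0$ or $1$ in the prescribed classes. An alternative route, which concentrates the same difficulty in a single step, is to construct a weight-preserving bijection $\calT_n^k \to \mathcal{B}_n^k$ directly by matching left-arrow rows and down-arrow columns to Callan pairs and the top staircase to the blue-block weight $w(\alpha)$; or to derive the closed form of Proposition~\ref{C-exp} for $T_n^k(x)$ directly, in which case the crux becomes showing that $w(\lambda)$ is equidistributed so as to contribute the factor $(x+1)^{\overline{j}}$ on the tableaux of each rank $j$, exactly as the Lemma does for permutations.
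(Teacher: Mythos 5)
Your skeleton is exactly the paper's own proof: peel off the rightmost column, split $\calT_n^k$ into three classes according to its content (no $\leftarrow$ at all; some $\leftarrow$'s but the top-right cell empty; a $\leftarrow$ in the top-right cell), show the weight changes by $0$, $0$, $1$ respectively, and close by induction using the recursion of Theorem \ref{Callan-poly}. Your reindexing so that the classes reproduce the Callan recursion term by term (absorbing the $n$ down-arrow-only columns into the $j=1$ term of the middle sum) is equivalent to the form the paper derives, namely $T_n^k(x) = (n+1)T_n^{k-1}(x) + \sum_{j=1}^{n-1}\binom{n}{j-1}T_j^{k-1}(x) + x\sum_{j=0}^{n-1}\binom{n}{j}T_j^{k-1}(x)$, and the column counts $\binom{n-1}{j-1}+\binom{n-1}{j}=\binom{n}{j}$ do work out. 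So the decomposition itself is fine.

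The genuine gap is in the weight bookkeeping, which is where all the content of this theorem lies. For your second class you claim the weight is unchanged ``since deleting lines preserves the left-to-right order of the remaining columns and hence all the further-to-the-right comparisons''; but that argument only concerns arrows that \emph{survive} the deletion. The deleted column itself contains left arrows, which a priori are counted by $w(\lambda)$, and you never show that they are not. The correct argument needs two facts: (i) the rows entering the weight are the maximal run $1,2,\dots,r$ of rows starting at the \emph{very top} row, each containing a $\leftarrow$ (if row $1$ has no $\leftarrow$, then $w(\lambda)=0$); and (ii) in this class the top-right cell is empty, so either row $1$ has no $\leftarrow$ at all (nothing counts, before or after deletion), or row $1$ has a $\leftarrow$ strictly to the left of the last column, and then no $\leftarrow$ of the last column can have all upper arrows strictly further right, so none of the deleted arrows counts --- while the deleted arrows, sitting in the rightmost column, can never block a surviving arrow from counting. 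This is not pedantry: for the $2\times 2$ tableau whose only arrow is a $\leftarrow$ in the bottom-right cell, the plausible misreading in which the weight run starts at the first row containing a $\leftarrow$ gives weight $1$, yet its reduction is the empty tableau of weight $0$; with that reading one gets $2x^2+20x+9$ instead of $T_2^2(x)=2x^2+15x+14$, and the recursion you are trying to verify is simply false. A parallel check is needed in your third class (the top-right $\leftarrow$ always counts exactly once, and the other deleted $\leftarrow$'s, lying in the same column, never count), together with the check that deleting killed rows preserves the top run of rows defining the weight. Finally, a slip in your obstacle paragraph: it is a $\leftarrow$ in the peeled column that forces a row (transverse line) to be empty; a $\downarrow$ there constrains only cells of that same column, so your sentence about down arrows forcing an entire transverse line to be empty has the two arrow types interchanged.
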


\begin{proof}
	For each $\lambda \in \calT_n^k$, we let $R = R(\lambda)$ denote the most right column of $\lambda$. We split the set $\calT_n^k$ into disjoint subsets as follows: Let $A$ denote the set $\lambda \in \calT_n^k$ such that $R$ contains no $\leftarrow$. Let $B$ denote the set $\lambda \in \calT_n^k$ such that the top-right box is empty and $R$ contains at least one $\leftarrow$. Let $C$ denote the set $\lambda \in \calT_n^k$ such that the top-right box contains $\leftarrow$.
	
	If $\lambda \in A$, then $R$ is empty or contains the unique $\downarrow$. The remaining rectangle $\lambda^- := \lambda \backslash R$ defines a sub-rectangle in $\calT_n^{k-1}$, and we see that $w(\lambda) = w(\lambda^-)$. The number of patterns of $R$ is $n+1$ (empty or one $\downarrow$). Thus, we get
	\[
		\sum_{\lambda \in A} x^{w(\lambda)} = (n+1) T_n^{k-1}(x).
	\]
	
	If $\lambda \in B$, then $R$ contains $j \leftarrow$'s ($1 \leq j \leq n-1$). For each $j$, the number of patterns of $R$ is ${n \choose j+1}$, ($j \leftarrow$ and zero or one $\downarrow$). In the rectangle $\lambda \backslash R$, $j$ rows are killed, and the remaining rows define a sub-rectangle $\lambda^- \in \calT_{n-j}^{k-1}$. In this case it holds for the weight $w(\lambda^-) = w(\lambda)$, and hence
	\[
		\sum_{\lambda \in B} x^{w(\lambda)} = \sum_{j=1}^{n-1} {n \choose j+1} T_{n-j}^{k-1} (x) = \sum_{j=1}^{n-1} {n \choose j-1} T_j^{k-1} (x).
	\]
	
	Finally, if $\lambda \in C$, then $R$ contains $(j+1) \leftarrow$'s ($0 \leq j \leq n-1$). For each $j$, the number of patterns of $R$ is ${n \choose j+1}$. In the rectangle $\lambda \backslash R$, $(j+1)$ rows are killed, and the remaining rows define a sub-rectangle $\lambda^- \in \calT_{n-j-1}^{k-1}$. In this case, the $\leftarrow$ in the corner affect the weight of $\lambda$, thus $w(\lambda^-) = w(\lambda) - 1$. Hence, 
	\[
		\sum_{\lambda \in C} x^{w(\lambda)} = x \sum_{j=0}^{n-1} {n \choose j+1} T_{n-j-1}^{k-1} (x) = x \sum_{j=0}^{n-1} {n \choose j} T_j^{k-1} (x).
	\]
	
	Therefore, we have
	\begin{align}\label{T-rec}
		T_n^k(x) = (n+1) T_n^{k-1}(x) + \sum_{j=1}^{n-1} {n \choose j-1} T_j^{k-1} (x) + x \sum_{j=0}^{n-1} {n \choose j} T_j^{k-1} (x),
	\end{align}
	which is equivalent to the recursion formula for the Callan polynomial in Theorem \ref{Callan-poly}.
\end{proof}

\begin{corollary}
	For any integers $n, k \geq 0$ and $m \geq 0$, we have
	\[
		T_n^k(m) = \widehat{\scB}_n^k(m).
	\]
\end{corollary}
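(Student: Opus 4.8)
The plan is to prove the identity by induction on $k$, establishing that $T_n^k(x)$ satisfies exactly the Callan recursion \eqref{recperm} of Theorem~\ref{Callan-poly} with the same initial data; since $T_n^0(x)=C_n^0(x)=1$ and $T_0^k(x)=C_0^k(x)=1$ by convention, matching recursions force $T_n^k(x)=C_n^k(x)$ for all $n,k\geq 0$. Fixing $k>0$ and assuming $T_n^{k-1}(x)=C_n^{k-1}(x)$ for every $n$, it remains to derive a recurrence for $T_n^k(x)$ in terms of the $T_\bullet^{k-1}$ and to reconcile it with \eqref{recperm}.

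First I would peel off the rightmost column $R$ of each $\lambda\in\calT_n^k$ and split $\calT_n^k$ into three classes according to whether $R$ contains no left arrow, contains a left arrow while its top cell is empty, or carries a left arrow in its top-right corner. The structural constraints of an alternative tableau — at most one left arrow per row, at most one down arrow per column, and a down arrow forced to lie below every left arrow of its column — let me enumerate the admissible fillings of $R$. The two subcases according to the presence of a down arrow in $R$ contribute binomial coefficients that combine, via Pascal's rule $\binom{n-1}{j}+\binom{n-1}{j+1}=\binom{n}{j+1}$, into the single factor $\binom{n}{j+1}$, while the arrow-free column contributes $n+1$ patterns (empty or one down arrow in any row). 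This yields
\[
	T_n^k(x) = (n+1)\,T_n^{k-1}(x) + \sum_{j=1}^{n-1}\binom{n}{j+1} T_{n-j}^{k-1}(x) + x\sum_{j=0}^{n-1}\binom{n}{j+1} T_{n-j-1}^{k-1}(x).
\]

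The crux — and the step I expect to be the main obstacle — is tracking the weight $w(\lambda)$ under deletion of $R$ and of the rows it kills. The key observation is that a left arrow lying in the rightmost column can contribute to $w(\lambda)$ only when it is the unique topmost left arrow of the whole tableau: the record condition requires every left arrow in a higher row to sit strictly to its right, which is impossible once any left arrow occurs above it, since nothing lies to the right of $R$. Consequently the arrows of $R$ are weight-inert in the first two classes, giving $w(\lambda)=w(\lambda^-)$, whereas in the corner class the top-right arrow is precisely the initial record and is destroyed on deletion, giving $w(\lambda)=w(\lambda^-)+1$ and hence the prefactor $x$. The delicate verification is that removing the killed rows preserves both the block of topmost consecutive rows containing left arrows and the left-to-right record pattern of the surviving arrows; I would argue this by checking that the relative horizontal order of the remaining left arrows and their top-to-bottom order are unchanged, so that the weight transfers intact to $\lambda^-\in\calT_{n-j}^{k-1}$ (respectively $\calT_{n-j-1}^{k-1}$).

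Finally, invoking the inductive hypothesis to replace each $T_\bullet^{k-1}$ by $C_\bullet^{k-1}$, I would match the displayed recurrence to \eqref{recperm} by a routine reindexing: substituting $j\mapsto j-1$ turns the $x$-sum into $x\sum_{j=1}^n\binom{n}{j}C_{n-j}^{k-1}(x)$, while peeling the $j=1$ term $n\,C_n^{k-1}(x)$ off the middle sum of \eqref{recperm} and absorbing it into the leading term produces the coefficient $n+1$ together with the remaining sum $\sum_{j=1}^{n-1}\binom{n}{j+1}C_{n-j}^{k-1}(x)$. The two recurrences then agree term by term, closing the induction and giving $T_n^k(x)=C_n^k(x)$. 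Specializing $x=m$ and combining with the earlier identity $C_n^k(m)=\widehat{\scB}_n^k(m)$ then yields the corollary $T_n^k(m)=\widehat{\scB}_n^k(m)$.
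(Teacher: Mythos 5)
Your proposal follows essentially the same route as the paper: the paper's proof of the preceding theorem also peels off the rightmost column $R$, splits $\calT_n^k$ into exactly your three classes (no $\leftarrow$ in $R$; $\leftarrow$'s in $R$ with the top-right cell empty; $\leftarrow$ in the top-right corner), counts the column patterns the same way to get your recurrence with the same weight bookkeeping $w(\lambda)=w(\lambda^-)$ resp.\ $w(\lambda)=w(\lambda^-)+1$, identifies it with the Callan recursion \eqref{recperm}, and then deduces the corollary from $C_n^k(m)=\widehat{\scB}_n^k(m)$ just as you do. The only difference is one of detail: the paper simply asserts the weight-transfer claims, whereas you sketch a justification (your record-tracking argument is sound, provided one reads the weight as counting records only in the initial block of rows starting at the top row, so that an $R$-arrow contributes precisely when it sits in the top-right corner).
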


%-----------------------------------------------------

\section{Applications}

%-----------------------------------------------------

First, we present a generalization of Ohno-Sasaki's result on poly-Bernoulli numbers \cite[Theorem 1]{OS20} (see also \cite{OS20+}).
\begin{align}\label{OS-eq}
	\sum_{0 \leq i \leq \ell \leq m} (-1)^{i} \st{m+2}{i+1} B_{n+\ell}^{(-k)} = 0 \qquad (n \geq 0, m \geq k > 0),
\end{align}
The theorem gives a new type of recurrence relation for the (normalized) symmetrized poly-Bernoulli numbers $\widehat{\scB}_n^k(m)$ with the single index $k$, (see also a related question in \cite[Remark 14.5]{AIK}). 

\begin{theorem}\label{OS}
	For any $n \geq 0, m > k \geq 0$, we have
	\[
		\sum_{\ell = 0}^m (-1)^\ell \st{m+1}{\ell+1} C_{n+\ell}^k (x) = 0.
	\]
\end{theorem}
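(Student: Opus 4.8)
The plan is to reduce the alternating Stirling sum to a statement about Stirling numbers alone, by inserting the closed form for the Callan polynomials from Proposition~\ref{C-exp}. Writing
\[
	C_{n+\ell}^k(x)=\sum_{j=0}^{k}j!\,(x+1)^{\overline{j}}\sts{n+\ell+1}{j+1}\sts{k+1}{j+1},
\]
where the upper limit may be taken to be $k$ because $\sts{n+\ell+1}{j+1}=0$ whenever $j>n+\ell$, and interchanging the order of summation, the claim becomes
\[
	\sum_{j=0}^{k}j!\,(x+1)^{\overline{j}}\sts{k+1}{j+1}\,S_j=0,
	\qquad
	S_j:=\sum_{\ell=0}^{m}(-1)^\ell\st{m+1}{\ell+1}\sts{n+\ell+1}{j+1}.
\]
Since the prefactors $j!\,(x+1)^{\overline{j}}\sts{k+1}{j+1}$ carry all of the $x$-dependence, it suffices to show $S_j=0$ for every $j$ with $0\le j\le k$. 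The hypothesis $m>k$ enters precisely here: it guarantees $j+1\le k+1\le m$ throughout this range.

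To evaluate $S_j$ I would expand the inner second-kind Stirling number by the surjection formula
\[
	\sts{n+\ell+1}{j+1}=\frac{1}{(j+1)!}\sum_{i=0}^{j+1}(-1)^{j+1-i}\binom{j+1}{i}\,i^{\,n+\ell+1},
\]
substitute it into $S_j$, and interchange the sums over $\ell$ and $i$. This isolates the entire $\ell$-dependence in the single factor $i^\ell$, leaving
\[
	S_j=\frac{1}{(j+1)!}\sum_{i=0}^{j+1}(-1)^{j+1-i}\binom{j+1}{i}\,i^{\,n+1}\,P(i),
	\qquad
	P(t):=\sum_{\ell=0}^{m}(-1)^\ell\st{m+1}{\ell+1}\,t^\ell.
\]

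The crux is to identify $P(t)$. Multiplying by $t$ and reindexing by $p=\ell+1$, the signed first-kind identity $\sum_{p}(-1)^{m+1-p}\st{m+1}{p}\,t^p=t(t-1)\cdots(t-m)$ gives $t\,P(t)=(-1)^{m}\,t(t-1)\cdots(t-m)$, and hence
\[
	P(t)=(-1)^{m}\,(t-1)(t-2)\cdots(t-m).
\]
Thus $P$ vanishes at every integer $t\in\{1,2,\dots,m\}$. In the sum for $S_j$ the index $i$ runs over $\{0,1,\dots,j+1\}$: the term $i=0$ is killed by $i^{\,n+1}$ (here $n+1\ge 1$), while for $1\le i\le j+1$ we have $i\le j+1\le m$, so $P(i)=0$. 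Every term therefore vanishes, giving $S_j=0$ and completing the reduction.

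I expect the only delicate point to be bookkeeping rather than substance, namely checking that $m>k$ is exactly what forces $j+1\le m$ across the whole range of $j$, so that the root set $\{1,\dots,m\}$ of $P$ covers all surviving values of $i$. A purely recursive alternative would be to feed the three-term recursion of Theorem~\ref{Callan-poly} into the alternating sum together with the Stirling recurrence $\st{m+1}{\ell+1}=\st{m}{\ell}+m\,\st{m}{\ell+1}$; this would stay inside the combinatorial framework of the paper, but it seems to produce a less transparent induction, so I would favor the closed-form computation above.
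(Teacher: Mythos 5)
Your proof is correct, and although it opens exactly as the paper does --- inserting Proposition~\ref{C-exp} and interchanging sums to reduce the theorem to the vanishing of $S_j=\sum_{\ell=0}^{m}(-1)^\ell\st{m+1}{\ell+1}\sts{n+\ell+1}{j+1}$ for all $0\le j\le k<m$ --- the way you establish this inner identity is genuinely different from the paper's. The paper proves it (its equation \eqref{iden}) by induction on $n$: the base case $n=0$ is the orthogonality-type identity $\sum_{\ell}(-1)^\ell\st{m+1}{\ell+1}\sts{\ell+1}{j+1}=(-1)^j\delta_{j,m}$ imported from \cite{AIK}, and the inductive step uses the recurrence $\sts{n+\ell+1}{j+1}=\sts{n+\ell}{j}+(j+1)\sts{n+\ell}{j+1}$. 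You instead evaluate $S_j$ in one stroke: expanding $\sts{n+\ell+1}{j+1}$ by inclusion--exclusion isolates all $\ell$-dependence in $P(i)=\sum_{\ell=0}^m(-1)^\ell\st{m+1}{\ell+1}i^\ell$, which you correctly identify as $(-1)^m(i-1)(i-2)\cdots(i-m)$ from the falling-factorial expansion $\sum_p(-1)^{m+1-p}\st{m+1}{p}t^p=t(t-1)\cdots(t-m)$; then every surviving evaluation point $1\le i\le j+1\le k+1\le m$ is a root of $P$, and the $i=0$ term is killed by $i^{\,n+1}$ since $n\ge 0$. Your route buys self-containedness (no induction, no external citation) and makes the role of the hypothesis $m>k$ completely transparent: it is exactly the condition that the root set $\{1,\dots,m\}$ of $P$ covers $\{1,\dots,j+1\}$ for every $j\le k$. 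Indeed, specializing your computation to $n=0$ even recovers, for $j<m$, the orthogonality identity the paper cites. What the paper's induction buys in exchange is brevity (given the cited identity) and the fact that it stays entirely within Stirling-number recurrences, in the spirit of the rest of the section.
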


\begin{proof}
	By Proposition \ref{C-exp}, the left-hand side equals
	\begin{align*}
		\sum_{j=0}^k j! (x+1)^{\overline{j}} \sts{k+1}{j+1} \sum_{\ell = 0}^\infty (-1)^\ell \st{m+1}{\ell+1} \sts{n+\ell+1}{j+1}.
	\end{align*}
	By showing the identity
	\begin{align}\label{iden}
		\sum_{\ell = 0}^\infty (-1)^\ell \st{m+1}{\ell+1} \sts{n+\ell+1}{j+1} = 0 \qquad \text{for } j < m,
	\end{align}
	the theorem holds by the assumption $k < m$. We prove Equation \eqref{iden} by induction on $n$. Let $\delta_{i,j}$ denote the Kronecker delta defined by $\delta_{i,j} = 1$ if $i = j$ and $\delta_{i,j} = 0$ otherwise. For $n = 0$, by \cite[Proposition 2.6 (5.2)]{AIK}, we have
	\[
		\sum_{\ell=0}^\infty (-1)^\ell \st{m+1}{\ell+1} \sts{\ell+1}{j+1} = (-1)^j \delta_{j, m},
	\]
	which equals $0$ if $j <m$. For any positive $n$, by the recurrence relation of the Stirling numbers  of the second kind,
	\[
		\sum_{\ell=0}^\infty (-1)^\ell \st{m+1}{\ell+1} \sts{n+\ell+1}{j+1} = \sum_{\ell=0}^\infty (-1)^\ell \st{m+1}{\ell+1} \left(\sts{n+\ell}{j} + (j+1) \sts{n+\ell}{j+1} \right),
	\]
	which also equals to $0$ by the induction hypothesis.
\end{proof}

For example, since $C_n^k(0) = \widehat{\scB}_n^k(0) = B_n^{(-k)}$, we get
\begin{align}\label{Ber-rec}
	\sum_{\ell = 0}^m (-1)^\ell \st{m+1}{\ell+1} B_{n+\ell}^{(-k)} = 0 \qquad (n \geq 0, m > k \geq 0).
\end{align}
Our formula looks simpler than Ohno-Sasaki's formula (\ref{OS-eq}). Here we show the relation between these two results. Let $\mathrm{OS}(n)$ be the left-hand side of (\ref{OS-eq}). By a direct calculation, 
\begin{align*}
	\mathrm{OS}(n) - \mathrm{OS}(n+1) &=  \sum_{0 \leq i \leq \ell \leq m} (-1)^{i} \st{m+2}{i+1} B_{n+\ell}^{(-k)} + \sum_{1 \leq i \leq \ell \leq m+1} (-1)^i \st{m+2}{i} B_{n+\ell}^{(-k)}\\
		&= \sum_{\ell = 0}^m (-1)^\ell \st{m+2}{\ell+1} B_{n+\ell}^{(-k)} + \sum_{i = 1}^{m+1} (-1)^i \st{m+2}{i} B_{n +m+1}^{(-k)}.
\end{align*}
Since
\begin{align*}
	\sum_{j=0}^n \st{n+1}{j+1} x^j = (x+1)^{\overline{n}}
\end{align*}
and $\st{n}{n} = 1$ hold, the last sum becomes $\sum_{i=1}^{m+1} (-1)^i \st{m+2}{i} = (-1)^{m+1}$. Hence,
\[
	\mathrm{OS}(n) - \mathrm{OS}(n+1) = \sum_{\ell=0}^{m+1} (-1)^\ell \st{m+2}{\ell+1} B_{n+\ell}^{(-k)},
\]
which coincides with the left-hand side of (\ref{Ber-rec}) with shifted $m$ by one. This concludes that the equation (\ref{OS-eq}) implies (\ref{Ber-rec}). 

Next, we give another recurrence formula.
\begin{theorem}\label{diag-sum}
	For any integers $n, k \geq 0$, we have
	\[
		\sum_{\ell=0}^n \st{n+1}{\ell+1} C_\ell^k (x) = n! \sum_{j=0}^{\min(n,k)} (x+1)^{\overline{j}} \sts{k+1}{j+1} {n+1 \choose j+1}.
	\]
\end{theorem}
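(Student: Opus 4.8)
The plan is to start from the closed formula for the Callan polynomial given in Proposition~\ref{C-exp},
\[
C_\ell^k(x)=\sum_{j=0}^{\min(\ell,k)} j!\,(x+1)^{\overline{j}}\sts{k+1}{j+1}\sts{\ell+1}{j+1},
\]
substitute it into the left-hand sum, and interchange the order of summation over $\ell$ and $j$. Since $\sts{\ell+1}{j+1}=0$ whenever $j>\ell$, I may let both $\ell$ and $j$ range freely and write the left-hand side as
\[
\sum_{j=0}^{k} j!\,(x+1)^{\overline{j}}\sts{k+1}{j+1}\sum_{\ell=0}^{n}\st{n+1}{\ell+1}\sts{\ell+1}{j+1}.
\]
The factors $j!\,(x+1)^{\overline{j}}\sts{k+1}{j+1}$ already match the target right-hand side up to the binomial coefficient, so the whole theorem reduces to the purely Stirling-number identity
\begin{align}\label{stirling-convolution}
\sum_{\ell=0}^{n}\st{n+1}{\ell+1}\sts{\ell+1}{j+1}=\frac{n!}{j!}\binom{n+1}{j+1}.
\end{align}

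First I would verify that~\eqref{stirling-convolution} is the right reduction: multiplying its right-hand side by $j!$ gives $n!\binom{n+1}{j+1}$, which is exactly the coefficient appearing in the statement, and the upper limit $\min(n,k)$ on the right-hand side is automatic since $\sts{k+1}{j+1}=0$ for $j>k$ while $\binom{n+1}{j+1}=0$ for $j>n$. So everything hinges on~\eqref{stirling-convolution}. The cleanest route is a generating-function or operator argument: the Stirling numbers of the first and second kind are the connection coefficients between the monomial basis $x^m$ and the falling-factorial basis $x^{\underline{m}}=x(x-1)\cdots(x-m+1)$, and their product $\sum_\ell \st{n+1}{\ell+1}\sts{\ell+1}{j+1}$ is a truncated composition of these two change-of-basis matrices. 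I would make this precise by expressing $\sts{\ell+1}{j+1}$ through the operator $\tfrac{1}{(j+1)!}(e^t-1)^{j+1}$ acting appropriately and $\st{n+1}{\ell+1}$ through $\log(1+u)$, or alternatively recognize~\eqref{stirling-convolution} directly via the known evaluation $\sum_\ell \st{n}{\ell}\sts{\ell}{j}$ of the Lah-type convolution (the entries of the product of the two triangular Stirling matrices), which is tabulated and equals a ratio of factorials times a binomial coefficient.

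If a slick generating-function identity is not immediately at hand, the safe fallback is induction on $n$, mirroring exactly the argument used in the proof of Theorem~\ref{OS}: using the recurrence $\sts{\ell+1}{j+1}=\sts{\ell}{j}+(j+1)\sts{\ell}{j+1}$ to split the left-hand side of~\eqref{stirling-convolution}, together with the first-kind recurrence $\st{n+1}{\ell+1}=\st{n}{\ell}+n\st{n}{\ell+1}$, and then checking that the right-hand side $\tfrac{n!}{j!}\binom{n+1}{j+1}$ obeys the matching recurrence in $n$. The main obstacle I anticipate is precisely bookkeeping this double recurrence cleanly: both Stirling triangles shift their arguments, so I must be careful to align the index shifts (the $\ell+1$ versus $\ell$, and the $j+1$ versus $j$) so that the two halves telescope into the Pascal-type recurrence for $n!\binom{n+1}{j+1}/j!$. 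Once~\eqref{stirling-convolution} is established, assembling the final formula is immediate: I substitute back, observe that the surviving sum is over $0\le j\le\min(n,k)$, and pull out the common factor $n!$ to obtain exactly the right-hand side of the theorem.
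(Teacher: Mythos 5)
Your proposal is correct and follows essentially the same route as the paper: substitute Proposition~\ref{C-exp}, interchange the sums over $\ell$ and $j$, and reduce everything to the convolution identity $\sum_{\ell=0}^{n}\st{n+1}{\ell+1}\sts{\ell+1}{j+1}=\frac{n!}{j!}\binom{n+1}{j+1}$, which is exactly the Lah-number identity the paper uses (the paper writes the right-hand side as $\binom{n}{j}\frac{(n+1)!}{(j+1)!}$, the same quantity). The only divergence is in how that identity is justified: the paper gives a self-contained double-counting argument --- both sides count partitions of $[n+1]$ into $j+1$ nonempty lists, once by splitting into $\ell+1$ cycles and grouping the cycles into $j+1$ blocks, once by cutting a permutation of $[n+1]$ with $j$ bars and dividing by $(j+1)!$ for the order of the lists --- whereas you invoke the standard change-of-basis fact that the product of the two Stirling matrices is the Lah matrix (rising factorial $\to$ monomial $\to$ falling factorial), with induction as a fallback. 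Both justifications are sound; the paper's is combinatorial and self-contained, in keeping with its theme, while yours delegates the inner identity to a classical, tabulated fact.
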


\begin{proof}
	By using Proposition \ref{C-exp} again, the left-hand side becomes
	\[
		\sum_{j=0}^\infty j! (x+1)^{\overline{j}} \sts{k+1}{j+1} \sum_{\ell=0}^n \st{n+1}{\ell+1}  \sts{\ell+1}{j+1}.
	\]
	The inner sum is an expression for the Lah numbers, which satisfies
	\[
		\sum_{\ell=0}^n \st{n+1}{\ell+1}  \sts{\ell+1}{j+1} = {n \choose j} \frac{(n+1)!}{(j+1)!}.
	\]
	Both sides of the identity counts the ways of partitions of $\{1,2,\ldots,n+1\}$ into $j+1$ linear arrangements, lists. In order to obtain a set of lists, split first the $n+1$ elements into $\ell+1$ cycles, then partition the $\ell+1$ cycles into $j+1$ blocks. The product of the cycles determines the list in a block.
On the other hand, take a permutation of $[n+1]$ and place bars to split it into $j+1$ pieces (from the $n$ places between the elements we choose $j$ to place the bars in $\binom{n}{j}$ ways). Since the order of the lists is irrelevant, we divide by the number of permutations of the lists, $(j+1)!$.

The theorem follows. 
\end{proof}

To apply the theorem for the special cases at $x = 0$ and $x=1$, we recall the following identity.

\begin{lemma}\label{Faul}
	For any integers $n > 0, k \geq 0$, we have
	\begin{align}\label{Seki}
		\sum_{j=0}^\infty j! \sts{k+1}{j+1} {n \choose j+1} = \sum_{i=1}^{n} i^k =:S_k(n).
	\end{align}
\end{lemma}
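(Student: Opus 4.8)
The plan is to give a bijective proof. The key observation is that the right-hand side $S_k(n) = \sum_{i=1}^{n} i^k$ counts functions $g \colon [k+1] \to [n]$ with the property that $g(j) \le g(k+1)$ for every $j \in [k]$. Indeed, fixing the value $i := g(k+1) \in [n]$, each of the remaining $k$ elements may be sent to any of the values $1, \ldots, i$, contributing $i^k$, and summing over $i \in [n]$ recovers $S_k(n)$. It therefore suffices to count this set of functions (call them weakly $(k+1)$-dominated) in a second way and to match it with the left-hand side.

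First I would classify each such $g$ by its image $T = g([k+1]) \subseteq [n]$. Writing $|T| = j+1$, the domination constraint forces $g(k+1) = \max T$, since $g(k+1)$ is at least as large as every attained value and is itself attained. The fibers $g^{-1}(t)$, $t \in T$, form a set partition of $[k+1]$ into $j+1$ nonempty blocks, counted by $\sts{k+1}{j+1}$, in which the block containing $k+1$ is distinguished as the one sent to $\max T$.

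To reconstruct $g$ I would reverse this process: choose the image $T$ of size $j+1$ in $\binom{n}{j+1}$ ways, choose a set partition of $[k+1]$ into $j+1$ blocks in $\sts{k+1}{j+1}$ ways, assign $\max T$ to the block containing $k+1$, and assign the remaining $j$ values of $T$ to the remaining $j$ blocks in $j!$ ways. Every weakly $(k+1)$-dominated function arises exactly once, so the total count is $\sum_{j \ge 0} j!\, \sts{k+1}{j+1} \binom{n}{j+1}$, which is precisely the left-hand side.

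The only delicate point is the verification that $\max T$ is always forced onto the block of $k+1$, so that exactly $j!$ free assignments of the remaining values remain; this is exactly the content of the domination constraint, and once it is isolated the bijection is routine. As an alternative purely algebraic route, one may apply the Stirling recurrence $\sts{k+1}{j+1} = (j+1)\sts{k}{j+1} + \sts{k}{j}$, reindex the two resulting sums, combine them with Pascal's rule into $\sum_{m} \sts{k}{m}\, m!\, \binom{n+1}{m+1}$, and recognize this as $S_k(n)$ via the expansion $i^k = \sum_{m} \sts{k}{m}\, m!\, \binom{i}{m}$ together with the hockey-stick identity $\sum_{i=0}^{n} \binom{i}{m} = \binom{n+1}{m+1}$.
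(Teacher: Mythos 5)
Your bijective proof is correct, but it takes a genuinely different route from the paper. The paper proves the identity analytically: it forms the exponential generating function $\sum_{k} s_k(n)\frac{t^k}{k!}$ of the left-hand side, uses $\sum_{k}\sts{k+1}{j+1}\frac{t^k}{k!}=\frac{e^t(e^t-1)^j}{j!}$ to show that the difference $s_k(n+1)-s_k(n)$ has generating function $e^{(n+1)t}$, and then telescopes from $s_k(1)=1$; it only supplements this with a combinatorial proof of the \emph{difference} identity $\sum_{j} j!\sts{k+1}{j+1}\binom{n}{j}=(n+1)^k$, realized by words $w_1\cdots w_k$ over $\{0,1,\dots,n\}$ with an artificial position $w_0:=0$ pinned to the value $0$. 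You instead prove the full identity in one stroke, by double-counting functions $g\colon[k+1]\to[n]$ satisfying $g(j)\le g(k+1)$: grouping by $g(k+1)=i$ gives $\sum_i i^k$, while grouping by the image $T$ and the fiber partition gives $\sum_j j!\sts{k+1}{j+1}\binom{n}{j+1}$, the forced assignment $g(k+1)=\max T$ being exactly what eliminates the extra factor. Your argument can be seen as a ``global'' version of the paper's combinatorial aside: where the paper pins the distinguished position to a fixed value $0$ (and therefore only gets the increment $(n+1)^k$, needing induction on $n$ to finish), you let the distinguished position carry the maximum of the image, which directly produces the summation over $i$ and avoids both generating functions and induction. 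This is arguably the cleaner proof for a combinatorics paper; the paper's route has the side benefit of exhibiting the generating-function identity, which is reused elsewhere (e.g.\ in Lemma \ref{last-lem}). One small caution on your alternative algebraic sketch at the end: the recombination into $\sum_m \sts{k}{m}\,m!\,\binom{n+1}{m+1}$ and the hockey-stick step both have an $m=0$ edge case (since $\binom{0}{0}=1\ne 0$) that is harmless for $k\ge 1$ because $\sts{k}{0}=0$, but needs a separate word when $k=0$; your main bijective argument, by contrast, covers $k=0$ with no special pleading.
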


\begin{proof}
	Let $s_k(n)$ the left-hand side of (\ref{Seki}), and consider the generating function
	\[
		\sum_{k=0}^\infty s_k(n) \frac{t^k}{k!} = \sum_{j=0}^\infty j! {n \choose j+1} \sum_{k=0}^\infty \sts{k+1}{j+1} \frac{t^k}{k!} = e^t \sum_{j=0}^\infty {n \choose j+1} (e^t-1)^j.
	\]
	The last equality follows from the fact \cite[Proposition 2.6, (7)]{AIK}
	\[
		\sum_{k=0}^\infty \sts{k+1}{j+1} \frac{t^k}{k!} = \frac{e^t (e^t-1)^j}{j!}.
	\]
	This implies that
	\[
		\sum_{k=0}^\infty (s_k(n+1) - s_k(n)) \frac{t^k}{k!} = e^t \sum_{j=0}^n {n \choose j} (e^t-1)^j = e^{(n+1)t} = \sum_{k=0}^\infty (n+1)^k \frac{t^k}{k!},
	\]
	that is, $s_k(1) = 1$ and $s_k(n+1) = s_k(n) +(n+1)^k$. Hence $s_k(n) = S_k(n)$.
	
	We can also prove the equation
	\[
		(s_k(n+1) - s_k(n) = ) \sum_{j=0}^\infty j! \sts{k+1}{j+1} {n \choose j} = (n+1)^k
	\]
	combinatorially. The term $(n+1)^k$ counts the number of words $w_1 w_2 \cdots w_k$ of length $k$ out of an alphabet with $n+1$ distinct letters $\{0, 1, \dots, n\}$. We can get such a word as follows also: add the special position $w_0 := 0$ and partition the $k+1$ positions of the word into $j+1$ subsets, on the positions of a subset, the entries are the same. We choose the remaining entries in $j! {n \choose j}$ ways.
\end{proof}

\begin{corollary}
	At $x = 0$,
	\begin{align}\label{B-Seki}
		\sum_{\ell=0}^n \st{n+1}{\ell+1} B_\ell^{(-k)} = n! S_k(n+1),
	\end{align}
	At $x = 1$, we also get
	\begin{align}\label{diag-sum-C}
		\sum_{\ell=0}^n \st{n+1}{\ell+1} \widehat{\scB}_\ell^k (1) = n! (n+1)^{k+1}.
	\end{align}
\end{corollary}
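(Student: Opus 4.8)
The plan is to derive both identities by specializing Theorem~\ref{diag-sum} at $x=0$ and $x=1$, using on the left-hand side the evaluations $C_\ell^k(0)=B_\ell^{(-k)}$ and $C_\ell^k(1)=\widehat{\scB}_\ell^k(1)$ recorded earlier, and on the right-hand side the elementary rising-factorial values $1^{\overline{j}}=1\cdot 2\cdots j=j!$ and $2^{\overline{j}}=2\cdot 3\cdots(j+1)=(j+1)!$. I would first observe that the bound $\min(n,k)$ in the right-hand sum of Theorem~\ref{diag-sum} may be replaced by $j\ge 0$ without changing the value, since $\sts{k+1}{j+1}=0$ for $j>k$ and $\binom{n+1}{j+1}=0$ for $j>n$; this lets me apply the auxiliary identities below over the full range.

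For $x=0$, substituting $1^{\overline{j}}=j!$ turns the right-hand side of Theorem~\ref{diag-sum} into
\[
	n!\sum_{j=0}^{\infty} j!\,\sts{k+1}{j+1}\binom{n+1}{j+1}.
\]
By Lemma~\ref{Faul} with $n$ replaced by $n+1$, the sum equals $S_k(n+1)$, so the expression collapses to $n!\,S_k(n+1)$, which is exactly \eqref{B-Seki}.

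For $x=1$, substituting $2^{\overline{j}}=(j+1)!$ gives the right-hand side $n!\sum_{j\ge 0}(j+1)!\,\sts{k+1}{j+1}\binom{n+1}{j+1}$, and reindexing by $i=j+1$ (the $i=0$ term vanishes because $\sts{k+1}{0}=0$) turns the inner sum into $\sum_{i\ge 0} i!\,\sts{k+1}{i}\binom{n+1}{i}$. This is the classical enumeration of all functions from a $(k+1)$-element set to an $(n+1)$-element set, sorted by the size $i$ of the image: the image is chosen in $\binom{n+1}{i}$ ways and a surjection onto it in $i!\,\sts{k+1}{i}$ ways. Hence the sum equals $(n+1)^{k+1}$ and the right-hand side becomes $n!\,(n+1)^{k+1}$, giving \eqref{diag-sum-C}. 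Both steps are purely formal once the rising factorials are evaluated, so I expect no real obstacle; the only point needing care is the harmless extension of the summation range so that Lemma~\ref{Faul} and the function-counting identity apply verbatim.
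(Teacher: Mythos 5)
Your proof is correct and follows essentially the same route as the paper: both specialize Theorem~\ref{diag-sum} at $x=0$ and $x=1$, evaluate the rising factorials to $j!$ and $(j+1)!$, invoke Lemma~\ref{Faul} (with $n$ replaced by $n+1$) for \eqref{B-Seki}, and identify the remaining sum in \eqref{diag-sum-C} as the classical image-size count of $(n+1)^{k+1}$, which is exactly the word-counting argument the paper reuses from Lemma~\ref{Faul} without the special position $w_0$. Your explicit extension of the summation range beyond $\min(n,k)$ and the reindexing $i=j+1$ are harmless bookkeeping steps the paper leaves implicit.
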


\begin{proof}
The first equation (\ref{B-Seki}) immediately follows from Theorem \ref{diag-sum} and Lemma \ref{Faul}.

For the second equation (\ref{diag-sum-C}), by Theorem \ref{diag-sum} again, we have
\[
	\sum_{\ell=0}^n \st{n+1}{\ell+1} \widehat{\scB}_\ell^k (1) = n! \sum_{j=0}^{\min(n,k)} (j+1)! \sts{k+1}{j+1} {n+1 \choose j+1} = n! (n+1)^{k+1}.
\]
The last equation is given combinatorially by counting the term $(n+1)^{k+1}$ in a similar way as the proof of Lemma \ref{Faul}. In this argument, we do not need the special position $w_0$.
\end{proof}

We also give a direct combinatorial proof for the identity (\ref{diag-sum-C}). Both sides of the equation counts the number of permutations of $[n+(k+1)]$ such that tall substrings of consecutive elements greater than $n$ are in increasing order. Such a permutation can be decoded by a pair $(\pi, w)$, where $\pi \in \mathfrak{S}_n$, a permutation of $n$ and $w=w_1\ldots w_k w_{k+1}$ is a word of length $k+1$ on the alphabet $\{0,1,\ldots, n\}$. Let $\sigma$ be a permutation with the above property. Then the subsequence of the elements $\{1,2,\ldots, n\}$ is $\pi$, while $w_i$ is the number of the elements to the left of $i+n$ that are smaller than or equal to $n$. Clearly, the number of such pairs is given by $n!(n+1)^{k+1}$. For instance, for $n=7$ and $k=6$ the permutation  $\sigma={\bf 11}-6-{\bf 8-10}-3-1-{\bf 13-14}-7-5-4-2-{\bf 9-12}$ is decoded by the pair $(\pi;w)=(6-3-1-7-5-4-2;1710733)$.

On the other hand, we obtain such a permutation $\sigma$ using Callan sequences as follows. A \emph{$C$-Callan permutation} is a Callan permutation starting with an element greater than $n$. Equivalently, a \emph{$C$-Callan sequence} is a ($0$-barred) Callan sequence of size $n \times k$ with an extra red block $R^* = \{\piros{*}\}$. It can be shown that $C$-Callan permutations are in bijection with $1$-barred Callan sequences, and hence, their number is $B_n^{(-k)} (1) = \widehat{\scB}_n^{k-1}(1)$. Take a $C$-Callan sequence with red elements $\{\piros{1}, \dots, \piros{\ell}, \piros{*}\}$ and blue elements $\{\kek{1}, \dots, \kek{k}, \kek{k+1}, \kek{*}\}$. By the definition of the $C$-Callan sequence, this ends with $(\kek{B}\cup\{ \kek{*}\}; \piros{*})$. Construct a permutation of $\{0, 1, \dots, n\}$ with $\ell+1$ cycles $c_0, c_1, \dots, c_\ell$ in $\st{n+1}{\ell+1}$ ways. Let $c_i$ denote the $i$th cycle in the natural order of the cycles determined by the smallest elements of them. So for instance $c_0$ denotes the cycle that contains $0$. Replace in the $C$-Callan sequence $\kek{*} \piros{*}$ by $c_0$, and each red element for $i > 0$ by the cycle $c_i$ and take the product of the cycles in each red block. Finally, delete $0$ and shift the blue elements by $n$, $\kek{i} \to i+n$. The so obtained permutation is $\sigma$. For instance, the $C$-Callan sequence $(\kek{4}; \piros{4}) (\kek{1}, \kek{3}; \piros{1}) (\kek{6}, \kek{7}; \piros{2}, \piros{3}) (\kek{2}, \kek{5}, \kek{*}; \piros{*})$ and the cycles $c_0 = (0), c_1 = (1, 3), c_2 = (2, 7), c_3 = (4,5), c_4 = (6)$ with $n = 7, k = 6, \ell = 4$ correspond to the above $\sigma$ by
\begin{align*}
	(\kek{4}; \piros{4}) (\kek{1}, \kek{3}; \piros{1}) (\kek{6}, \kek{7}; \piros{2}, \piros{3}) &(\kek{2}, \kek{5}, \kek{*}; \piros{*}) \to \kek{4} (6) \kek{13} (1,3) \kek{67} (2,7)(4,5) \kek{25} (0)\\
		&\to \kek{4}-6-\kek{1}-\kek{3}-3-1-\kek{6}-\kek{7}-7-5-4-2-\kek{2}-\kek{5}-0\\
		&\to {\bf 11}-6-{\bf 8-10}-3-1-{\bf 13-14}-7-5-4-2-{\bf 9-12}.
\end{align*}

%-----------------------------------------------------

\section{Further problems}

%-----------------------------------------------------

In section \ref{s5}, we define the weight $w_{\leftarrow}(\lambda) := w(\lambda)$ on alternative tableaux by using left arrows $\leftarrow$. We let $w_\downarrow (\lambda)$ denote another weight on alternative tableaux corresponding to down arrows similarly. More precisely, for each $\lambda \in \calT_n^k$, the weight $w_\downarrow(\lambda)$ is defined as follows.
\begin{itemize}
	\item[1.] Consider the first (from the right) consecutive columns that contain down arrows $\downarrow$.
	\item[2.] Count the number of down arrows $\downarrow$ such that all $\downarrow$ in the right-hand columns are located in the upper rows.
\end{itemize}
Figure \ref{altweight2} shows the list of all elements in $\calT_2^2$ with the weight $w_\downarrow(\lambda)$.
\begin{figure}[H]
	\centering
	\includegraphics[width=160mm]{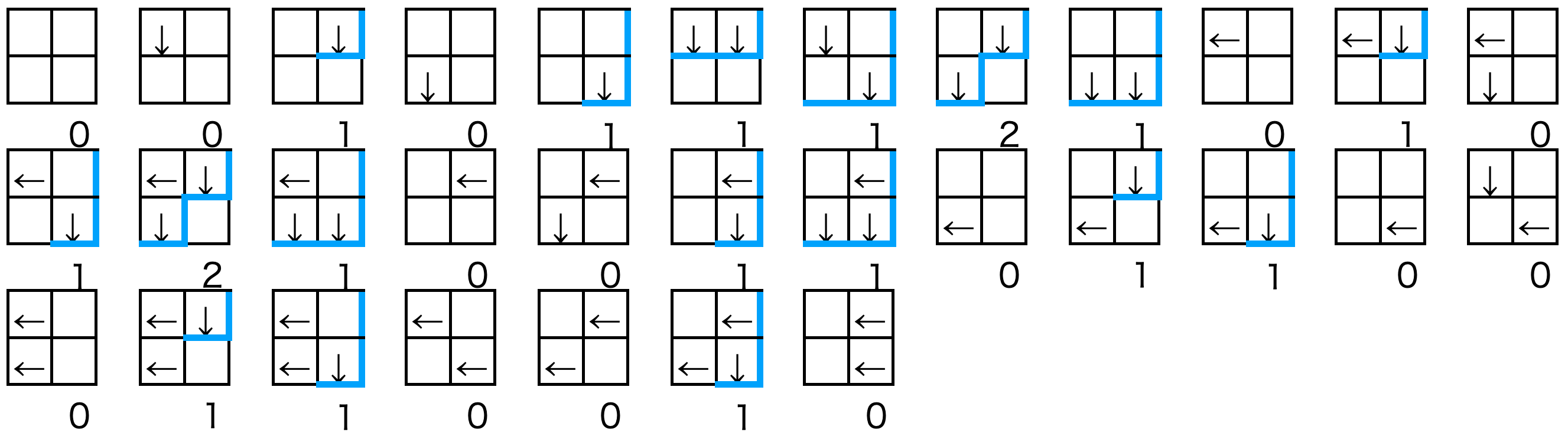}
	\caption{Alternative tableaux of size $2 \times 2$ with their weights $w_\downarrow(\lambda)$}\label{altweight2}
\end{figure}

We define the two-variable polynomial
\[
	T_n^k(x,y) := \sum_{\lambda \in \calT_n^k} x^{w_\leftarrow(\lambda)} y^{w_\downarrow(\lambda)}.
\]
From the above example, $T_2^2(x,y) = x^2 y + xy^2 + x^2 + 7xy + y^2 + 7x + 7y + 6$. By simple observations, we also see that
\[
	T_n^1(x,y) = T_1^n (x,y) = (2^{n-1} -1)xy + 2^{n-1} x + 2^{n-1} y + 2^{n-1}.
\]

\begin{conjecture}
	We put
	\begin{align*}
		t_n^0(x,y) &= t_0^k(x,y) = 1,\\
		t_n^1(x,y) &= t_1^n(x,y) = (2^{n-1}-1)xy + 2^{n-1}x + 2^{n-1}y + 2^{n-1}
	\end{align*}
	as initial values. The polynomials defined by
	\begin{align*}
		t_n^k(x,y) &:= \sum_{j=0}^n {n+1 \choose j} t_j^{k-1} (x,y) + (x-1) \sum_{j=0}^{n-1} {n \choose j} t_j^{k-1} (x,y)\\
			& \qquad + (y-1) \sum_{j=0}^{n-1} {n \choose j} t_j^{k-1} (x,y) + (x-1)(y-1) \sum_{j=0}^{n-1} {n-1 \choose j} t_j^{k-1} (x,y)
	\end{align*}
	coincide with $T_n^k(x,y)$.
\end{conjecture}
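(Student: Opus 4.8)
The plan is to prove the conjecture by induction on $k$, verifying that the polynomials $T_n^k(x,y)=\sum_{\lambda\in\calT_n^k}x^{w_\leftarrow(\lambda)}y^{w_\downarrow(\lambda)}$ satisfy both the stated initial data and the four-term recurrence; since that recurrence together with the initial values determines the family $t_n^k(x,y)$ uniquely, this identifies $T_n^k=t_n^k$. First I would dispose of the base cases. The equalities $T_n^0=T_0^k=1$ hold because the only tableau of size $n\times 0$ or $0\times k$ is empty, so $w_\leftarrow=w_\downarrow=0$. The single-column (respectively single-row) formula $T_n^1(x,y)=T_1^n(x,y)=(2^{n-1}-1)xy+2^{n-1}x+2^{n-1}y+2^{n-1}$ I would check by direct enumeration: in one column there is at most one $\downarrow$ and the $\leftarrow$'s may sit in an arbitrary subset of the rows above it, and a short count of these configurations against their two weights reproduces the claimed polynomial (this is the ``simple observation'' the paper alludes to).

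For the inductive step I would reuse the decomposition of $\calT_n^k$ by its rightmost column $R$ that already drives the proof of Theorem~\ref{Callan-poly}, now refining it to record simultaneously the content of the top-right corner (which governs $w_\leftarrow$) and the $\downarrow$ of $R$ (which governs $w_\downarrow$). The $w_\leftarrow$ bookkeeping is inherited verbatim from the single-variable argument, since down arrows are invisible to $w_\leftarrow$: deleting $R$ together with the rows killed by its $\leftarrow$'s yields a reduced tableau $\lambda^-$ with $w_\leftarrow(\lambda^-)=w_\leftarrow(\lambda)$ unless the top-right box carries a $\leftarrow$, in which case $w_\leftarrow$ drops by one. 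This dichotomy furnishes the factors $1$ and $(x-1)$, and Pascal's identity $\binom{n+1}{j}=\binom{n}{j}+\binom{n}{j-1}$ reorganizes the row-choices so that $\binom{n+1}{j}$ accompanies the ``$1$'' and $\binom{n}{j}$ accompanies $(x-1)$, exactly as at $y=1$. I would also record the clean fact (checkable on $\calT_2^2$) that $w_\downarrow(\lambda)=0$ precisely when $R$ carries no down arrow, so that every such tableau contributes $y$-exponent $0$.

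The crux, and the step I expect to be the main obstacle, is the behavior of $w_\downarrow$ under this deletion, because $w_\downarrow$ is read off from exactly the rightmost columns that the decomposition strips away. Two facts are favorable: the rows killed in passing to $\lambda^-$ are empty in columns $1,\dots,k-1$ (emptied by their $\leftarrow$ in $R$), so their removal deletes no down arrow of the retained columns and preserves the top-to-bottom order of the surviving down arrows; hence the right-to-left record-maxima structure defining $w_\downarrow$ on columns $1,\dots,k-1$ is untouched. What is \emph{not} clean is the effect of the down arrow of $R$ itself: if it lies in row $d$, one finds $w_\downarrow(\lambda)=1+\#\{\,c<k \text{ a record column of }\lambda^- : d_c>d\,\}$, where $d_c$ is the row of the down arrow in column $c$. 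Thus $w_\downarrow(\lambda)-w_\downarrow(\lambda^-)$ depends on the actual rows of the records of $\lambda^-$, not merely on $w_\downarrow(\lambda^-)$, so no weight-preserving bijection onto $\calT_j^{k-1}$ can exist; this row-dependence is precisely why the recurrence must carry the corrections $(y-1)$ and $(x-1)(y-1)$.

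To break this obstacle I would linearize both statistics by inclusion--exclusion, writing $y^{w_\downarrow(\lambda)}=\sum_{S}(y-1)^{|S|}$ with $S$ ranging over subsets of the record columns of $\lambda$, and symmetrically $x^{w_\leftarrow(\lambda)}=\sum_{A}(x-1)^{|A|}$. Because the corner $\leftarrow$ (when present) is automatically the first record of $w_\leftarrow$ and the $\downarrow$ of $R$ (when present) is automatically the first record of $w_\downarrow$, summing over whether these two distinguished records are marked isolates exactly the four combinations $1,\ (x-1),\ (y-1),\ (x-1)(y-1)$. The remaining and genuinely delicate task is to show that, once the record in $R$ has been marked, summing the row-dependent count above over the admissible rows $d$ of that down arrow, jointly with the $\binom{\,\cdot\,}{j}$ choices of which new rows to kill, collapses into precisely the binomials $\binom{n+1}{j},\ \binom{n}{j},\ \binom{n-1}{j}$ attached to the four terms; heuristically a marked corner $\leftarrow$ and a marked rightmost $\downarrow$ each consume one of the $n+1$ available row-slots, which is what lowers the upper index by one per active factor. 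As consistency checks I would confirm that $y=1$ recovers the recurrence of Theorem~\ref{Callan-poly} (an identity provable by Pascal's rule) and that the output respects the transpose symmetry $T_n^k(x,y)=T_k^n(y,x)$ already visible in $T_2^2$. Carrying out the collapse of the row-dependent sum to the stated binomials is the single point where I expect to spend the bulk of the effort.
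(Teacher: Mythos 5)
You are attempting to prove a statement that the paper itself leaves open: this is stated as a \emph{conjecture}, and the authors only verify it for $(n,k)=(2,2),(3,2),(2,3)$ by hand and observe that the specializations $x=1$ or $y=1$ reduce to the known one-variable recurrence \eqref{T-rec}. So there is no paper proof to compare against; the only question is whether your argument closes the conjecture, and it does not. Your base cases are fine, and your diagnosis of the obstacle is accurate (and consistent with the definitions): $w_\downarrow$ is computed from exactly the columns that the rightmost-column decomposition of Theorem \ref{Callan-poly} strips away, and when the rightmost column $R$ carries a $\downarrow$ in row $d$, the change $w_\downarrow(\lambda)-w_\downarrow(\lambda^-)$ depends on the actual rows of the record $\downarrow$'s of $\lambda^-$, not only on $w_\downarrow(\lambda^-)$. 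But the decisive step --- showing that this row-dependent correction, summed jointly over the position $d$, the choice of killed rows, and the inclusion--exclusion over marked records, collapses to the binomials $\binom{n+1}{j}$, $\binom{n}{j}$, $\binom{n-1}{j}$ attached to the four terms --- is precisely the content of the conjecture, and you leave it as a heuristic (``each active factor consumes one of the $n+1$ row-slots''). A proof plan whose central identity is conceded to be unproven is not a proof.

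There is concrete evidence that this missing step cannot be routine: \emph{the conjectured recurrence is false at $k=1$}, which is exactly why the conjecture supplies $t_n^1(x,y)$ as separate initial data rather than deriving it from $t_n^0$. Indeed, applying the recurrence with $t_j^0=1$ gives
\[
	\sum_{j=0}^n \binom{n+1}{j} + \bigl((x-1)+(y-1)\bigr)\sum_{j=0}^{n-1}\binom{n}{j} + (x-1)(y-1)\sum_{j=0}^{n-1}\binom{n-1}{j}
	= T_n^1(x,y) + (x-1)(y-1),
\]
an overshoot by exactly $(x-1)(y-1)$, invisible at $x=1$ or $y=1$. Consequently any correct proof of the inductive step must use $k\geq 2$ in an essential way. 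Your sketched argument --- delete $R$, kill the rows emptied by its $\leftarrow$'s, expand $x^{w_\leftarrow}y^{w_\downarrow}$ over marked records, and match binomials --- nowhere distinguishes $k=1$ from $k\geq 2$; if it were valid as outlined, it would apply verbatim to $k=1$ and prove a false identity. So the ``collapse'' you defer is not merely laborious: as formulated it cannot be carried out, and completing it requires identifying the interaction (between the marked corner $\leftarrow$ and the marked $\downarrow$ of $R$) that is present only when a second column exists. Until that idea is supplied, the statement remains what the paper says it is: an open conjecture.
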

We checked the coincidence for $(n,k) = (2,2), (3,2)$, and $(2,3)$ by hand. By comparing the recurrence formula at $x =1$ or $y=1$ with that in (\ref{T-rec}), we easily see that $t_n^k(x,1) = t_n^k(1,x) = T_n^k(x)$. 

Another direction is to consider the polynomial at other values, for instance at negative integers. 

By applying Theorem \ref{OS} for $n=-1$ formally, we get
\[
	``C_k^{-1} (x) = -\frac{1}{m!} \sum_{\ell=1}^m (-1)^\ell \st{m+1}{\ell+1} C_k^{\ell-1} (x)".
\]
Here we used the symmetric property $C_n^k(x) = C_k^n(x)$. Recalling the condition $m > k \geq 0$ on $m$, and specializing by $m = k+1$, we tentatively define $C_k^{-1}(x)$ by
\[
	C_k^{-1}(x) := \frac{1}{(k+1)!} \sum_{\ell=0}^k (-1)^\ell \st{k+2}{\ell+2} C_k^\ell (x).
\]
\begin{proposition}
	For any integer $k \geq 0$, we have
	\[
		C_k^{-1} (x) = -\frac{S_k(-x)}{x},
	\]
	where $S_k(x)$ is the Seki-Bernoulli polynomial \cite[Section 1.2]{AIK} defined by
	\[
		S_k(x) := \frac{1}{k+1} \sum_{j=0}^k {k+1 \choose j} B_j x^{k+1-j}
	\]
	with the classical Bernoulli number $B_k = B_k^{(1)} (0)$.
\end{proposition}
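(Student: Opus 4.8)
The plan is to collapse the entire statement to a single Stirling-number evaluation by expanding everything through the closed form of Proposition \ref{C-exp}. Substituting
\[
	C_k^\ell(x)=\sum_{j=0}^{\ell} j!\,(x+1)^{\overline{j}}\sts{k+1}{j+1}\sts{\ell+1}{j+1}
\]
(valid since $\min(k,\ell)=\ell$ for $0\le\ell\le k$) into the defining expression for $C_k^{-1}(x)$ and interchanging the two summations, I would obtain
\[
	C_k^{-1}(x)=\frac{1}{(k+1)!}\sum_{j=0}^{k} j!\,(x+1)^{\overline{j}}\sts{k+1}{j+1}\,\Sigma_j(k),\qquad
	\Sigma_j(k):=\sum_{\ell=j}^{k}(-1)^\ell\st{k+2}{\ell+2}\sts{\ell+1}{j+1}.
\]
Thus the whole problem reduces to evaluating the coefficient sum $\Sigma_j(k)$, and I claim that $\Sigma_j(k)=(-1)^j(k+1)!/(j+1)!$.

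To prove this claim I would apply the recurrence for the Stirling numbers of the first kind, $\st{k+2}{\ell+2}=(k+1)\st{k+1}{\ell+2}+\st{k+1}{\ell+1}$. The term coming from $(k+1)\st{k+1}{\ell+2}$ reproduces exactly $(k+1)\Sigma_j(k-1)$, while the term coming from $\st{k+1}{\ell+1}$ is precisely the $n=0$ instance of identity \eqref{iden} already established in the proof of Theorem \ref{OS}, namely $\sum_{\ell}(-1)^\ell\st{k+1}{\ell+1}\sts{\ell+1}{j+1}=(-1)^j\delta_{j,k}$. This produces the clean recurrence $\Sigma_j(k)=(k+1)\Sigma_j(k-1)+(-1)^j\delta_{j,k}$, and an induction on $k\ge j$ finishes it: the base case $k=j$ gives $\Sigma_j(j)=(-1)^j$ because $\Sigma_j(j-1)$ vanishes (the index ranges $\ell\ge j$ and $\ell+2\le j+1$ are incompatible), and for $k>j$ the delta drops out, leaving $\Sigma_j(k)=(k+1)\Sigma_j(k-1)$. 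Feeding the closed form back in and simplifying $j!/(j+1)!=1/(j+1)$ yields
\[
	C_k^{-1}(x)=\sum_{j=0}^{k}\frac{(-1)^j}{j+1}\,(x+1)^{\overline{j}}\sts{k+1}{j+1}.
\]

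It then remains to identify the right-hand side $-S_k(-x)/x$ with this same expression. Under the convention of the statement, $B_1=B_1^{(1)}(0)=\tfrac12$, which is exactly the convention for which the Seki--Bernoulli polynomial satisfies $S_k(n)=\sum_{i=1}^{n}i^k$ at positive integers $n$; hence by Lemma \ref{Faul} it agrees as a polynomial with $S_k(x)=\sum_{j=0}^{k}\frac{1}{j+1}\sts{k+1}{j+1}\,x(x-1)\cdots(x-j)$. Replacing $x$ by $-x$, using $(-x)(-x-1)\cdots(-x-j)=(-1)^{j+1}x\,(x+1)^{\overline{j}}$, and dividing by $-x$ reproduces the displayed formula for $C_k^{-1}(x)$ term by term, which completes the argument. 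I expect the main obstacle to be the evaluation of $\Sigma_j(k)$: the sum is an unfamiliar ``shifted'' pairing of first- and second-kind Stirling numbers, and the key insight is to notice that the first-kind recurrence peels it back onto the orthogonality-type identity \eqref{iden} that is already available. A secondary, purely bookkeeping, hurdle is matching the Bernoulli-number definition of $S_k$ with the combinatorial expression of Lemma \ref{Faul}, which is why pinning down the $B_1=\tfrac12$ convention is worth stating explicitly.
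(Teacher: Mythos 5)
Your proposal is correct, and its overall skeleton is the paper's: expand $C_k^{-1}(x)$ through Proposition \ref{C-exp}, evaluate the resulting inner Stirling sum, and then identify the outcome with $-S_k(-x)/x$ via Lemma \ref{Faul} plus a polynomials-agree-at-all-positive-integers argument. Your claim $\Sigma_j(k)=(-1)^j(k+1)!/(j+1)!$ is exactly the paper's Lemma \ref{last-lem}, which states $\sum_{\ell=j}^k (-1)^{\ell+j}\st{k+2}{\ell+2}\sts{\ell+1}{j+1} = (k+1)!/(j+1)!$ (equivalent to yours after multiplying by $(-1)^j$). Where you genuinely diverge is in how this key identity is proved. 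The paper computes the exponential generating function in $k$: using $\sum_{k\geq \ell}\st{k+2}{\ell+2}\frac{t^{k+1}}{(k+1)!}$ and $\sum_{\ell\geq j}\sts{\ell+1}{j+1}\frac{(\log(1-t))^{\ell+1}}{(\ell+1)!}$, the double sum collapses to $\frac{1}{(j+1)!}\frac{t^{j+1}}{1-t}$, whose coefficients give the result. You instead run an induction on $k$ by splitting $\st{k+2}{\ell+2}=(k+1)\st{k+1}{\ell+2}+\st{k+1}{\ell+1}$ and recognizing the second piece as the $n=0$ orthogonality identity \eqref{iden} already established in the proof of Theorem \ref{OS}; I checked the resulting recurrence $\Sigma_j(k)=(k+1)\Sigma_j(k-1)+(-1)^j\delta_{j,k}$ and the empty-sum base case, and both are sound. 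Your route is more elementary and self-contained, recycling machinery the paper already has and avoiding formal power series; the paper's generating-function computation is shorter and more direct. The final identification with $S_k$ is also organized slightly differently --- the paper sets $s_k(x)=xC_k^{-1}(-x)$ and compares with the power sum at positive integers, while you first rewrite $S_k(x)$ in falling-factorial form via Lemma \ref{Faul} and then substitute $-x$ --- but these are the same argument; in both cases one needs the classical fact that the Bernoulli-number formula for $S_k$ reproduces $\sum_{i=1}^n i^k$ under the $B_1=\tfrac12$ convention, which you helpfully make explicit where the paper leaves it to the citation of \cite[Section 1.2]{AIK}.
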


\begin{proof}
	Let $s_k(x) := x C_k^{-1}(-x)$. By Proposition \ref{C-exp},
	\[
		s_k(x) = x C_k^{-1} (-x) = -\frac{1}{(k+1)!} \sum_{j=0}^k j! (-x)^{\overline{j+1}} \sts{k+1}{j+1} \sum_{\ell=0}^\infty (-1)^\ell \st{k+2}{\ell+2} \sts{\ell+1}{j+1}.
	\]
	Since the inner sum over $\ell$ equals $(-1)^j (k+1)!/(j+1)!$ for $0 \leq j \leq k$ (we prove it in Lemma \ref{last-lem}), we have
	\begin{align*}
		s_k(x) = \sum_{j=0}^k (-1)^{j+1} (-x)^{\overline{j+1}} \sts{k+1}{j+1} \frac{1}{j+1}.
	\end{align*}
	For any positive integer $n > 0$, 
	\[
		s_k(n) = \sum_{j=0}^k j! {n \choose j+1} \sts{k+1}{j+1}.
	\]
	By Lemma \ref{Faul}, this equals $S_k(n)$. Since $s_k(x)$ and $S_k(x)$ are polynomials, this concludes the proof, that is, $s_k(x) = S_k(x)$.
\end{proof}

\begin{lemma}\label{last-lem}
	For integers $k \geq j \geq 0$, we have
	\[
		\sum_{\ell=j}^k (-1)^{\ell+j} \st{k+2}{\ell+2} \sts{\ell+1}{j+1} = \frac{(k+1)!}{(j+1)!}.
	\]
\end{lemma}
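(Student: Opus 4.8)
The plan is to prove the identity by induction on $k$ with $j$ fixed, using the recurrence for the Stirling numbers of the first kind to strip off one factor of $(k+1)$ and then recognizing the leftover sum as the classical orthogonality relation between the two kinds of Stirling numbers. Write $F(k,j)$ for the left-hand side, so the claim is $F(k,j) = (k+1)!/(j+1)!$. Note that $\st{k+2}{\ell+2}$ vanishes unless $\ell \le k$ and $\sts{\ell+1}{j+1}$ vanishes unless $\ell \ge j$, so the stated range $j \le \ell \le k$ is exactly the support; in particular $F(k,j)$ is an empty sum whenever $k < j$.

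First I would apply the recurrence $\st{k+2}{\ell+2} = (k+1)\st{k+1}{\ell+2} + \st{k+1}{\ell+1}$ inside the sum. The summand carrying the factor $(k+1)$ reproduces $F(k-1,j)$ verbatim (after the shift $k \mapsto k-1$ in the definition), leaving
\[
	F(k,j) = (k+1)F(k-1,j) + \sum_{\ell}(-1)^{\ell+j}\st{k+1}{\ell+1}\sts{\ell+1}{j+1}.
\]
The key step is to evaluate this last sum. Substituting $m=\ell+1$ rewrites it as $(-1)^{j-1}\sum_m (-1)^m \st{k+1}{m}\sts{m}{j+1}$, and the orthogonality relation $\sum_m (-1)^{(k+1)-m}\st{k+1}{m}\sts{m}{j+1} = \delta_{k+1,j+1}$ shows that, after the signs are reconciled, this collapses to $\delta_{k,j}$.

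Hence the recurrence reads $F(k,j) = (k+1)F(k-1,j) + \delta_{k,j}$. For the base case $k=j$ the term $(k+1)F(k-1,j)$ drops out because $F(j-1,j)$ is an empty sum, leaving $F(j,j) = 1 = (j+1)!/(j+1)!$. For $k>j$ the Kronecker delta vanishes and the recurrence is simply $F(k,j) = (k+1)F(k-1,j)$, so an immediate induction yields $F(k,j) = (k+1)!/(j+1)!$, as desired.

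I expect the only genuine subtlety to lie in the bookkeeping of the signs and the index shift $m=\ell+1$, so that the leftover sum is correctly identified with the Stirling orthogonality relation; once it is pinned down as $\delta_{k,j}$, the induction is routine and the Kronecker delta is precisely absorbed by the base case $k=j$.
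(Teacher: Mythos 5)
Your proof is correct, and it takes a genuinely different route from the paper's. The paper proves the lemma by a generating-function computation: it multiplies the left-hand side by $t^{k+1}/(k+1)!$, sums over $k$, exchanges the order of summation, and uses the exponential generating functions of the two kinds of Stirling numbers (citing \cite[Proposition 2.6 (7) and (9)]{AIK}) to collapse everything to $\frac{1}{(j+1)!}\frac{t^{j+1}}{1-t}$, from which the identity is read off for all $k$ at once. You instead induct on $k$ with $j$ fixed: the recurrence $\st{k+2}{\ell+2}=(k+1)\st{k+1}{\ell+2}+\st{k+1}{\ell+1}$ splits your $F(k,j)$ into $(k+1)F(k-1,j)$ (legitimately, since $\st{k+1}{\ell+2}$ vanishes at $\ell=k$) plus the sum
\[
	\sum_{\ell=j}^{k}(-1)^{\ell+j}\st{k+1}{\ell+1}\sts{\ell+1}{j+1}
	=(-1)^{j-1}(-1)^{k+1}\delta_{k+1,j+1}=\delta_{k,j},
\]
where the middle equality is the Stirling orthogonality relation after the shift $m=\ell+1$, and the signs do reconcile because $(-1)^{j-1}(-1)^{k+1}=1$ whenever $k=j$. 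Notably, this collapsing sum is exactly the identity $\sum_{\ell=0}^{\infty}(-1)^{\ell}\st{m+1}{\ell+1}\sts{\ell+1}{j+1}=(-1)^{j}\delta_{j,m}$ (take $m=k$ and multiply by $(-1)^{j}$) that the paper already invokes from \cite[Proposition 2.6 (5.2)]{AIK} in the proof of Theorem \ref{OS}, so your argument stays entirely within the paper's toolkit. The resulting recurrence $F(k,j)=(k+1)F(k-1,j)+\delta_{k,j}$, together with the empty sum $F(j-1,j)=0$, gives the base case $F(j,j)=1$ and then $F(k,j)=(k+1)!/(j+1)!$ by induction. Comparing the two approaches: the paper's proof is compact given the EGF identities and treats all $k$ simultaneously, while yours is more elementary (finite sums only, no interchange of infinite series to justify) and makes the origin of the factorial ratio transparent --- it is just the product of the factors $(k+1)$ peeled off one at a time, seeded by orthogonality at $k=j$.
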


\begin{proof}
	Consider the generating function of the left-hand side with respect to $k$. By \cite[Proposition 2.6 (7) and (9)]{AIK}, 
	\begin{align*}
		\sum_{k=j}^\infty \sum_{\ell=j}^k (-1)^{\ell+j} \st{k+2}{\ell+2} \sts{\ell+1}{j+1} \frac{t^{k+1}}{(k+1)!} &= \sum_{\ell=j}^\infty (-1)^{\ell+j}\sts{\ell+1}{j+1} \sum_{k=\ell}^\infty \st{k+2}{\ell+2} \frac{t^{k+1}}{(k+1)!}\\
		&= \frac{(-1)^{j+1}}{1-t} \sum_{\ell=j}^\infty \sts{\ell+1}{j+1} \frac{(\log(1-t))^{\ell+1}}{(\ell+1)!}\\
		&= \frac{1}{(j+1)!} \frac{t^{j+1}}{1-t} = \sum_{k=j}^\infty \frac{(k+1)!}{(j+1)!} \frac{t^{k+1}}{(k+1)!}
	\end{align*}
	This concludes the proof.
\end{proof}

One natural question is whether there exists a suitable generalization of the Callan polynomial $C_n^k(x)$ or the symmetrized poly-Bernoulli numbers $\widehat{\scB}_n^k(m)$ for negative integers $k$ and $m$.

It would be interesting to investigate the polynomials that arise by the weight function on alternative tableaux of other special shapes or on arbitrary shapes. 

In this paper we did not provide bijections between our models. It would be interesting to find simple bijections, especially between alternative tableaux and the Callan sequences. Also there should exist combinatorial proofs of Theorem \ref{OS} and so on.

\section*{Acknowledgements}

We would like to thank Yasuo Ohno and Yoshitaka Sasaki for sending us their preprint and some helpful comments. Further, we thank to Sithembele Nkonkobe for helpful conversations. The second author was supported by JSPS KAKENHI Grant Number 20K14292.

\end{document}